\newcommand{\Zz}{\mathbb{Z}}
\newcommand{\Cc}{\mathbb{C}}
\newcommand{\Pp}{\mathbb{P}}
\newcommand{\Rr}{\mathbb{R}}
\newcommand{\zz}{\mathbf{z}}
\newcommand{\0}{\mathbf{0}}
\newcommand{\Ver}{\operatorname{Vert}}
\newcommand{\Conv}{\operatorname{Conv}}
\newcommand{\Hom}{\operatorname{Hom}}
\newcommand{\rk}{\operatorname{rank}}
\newcommand{\Coh}{\operatorname{Coh}}
\newcommand{\D}{\operatorname{D}}
\newcommand{\Dcoh}{\operatorname{Dcoh}}
\newcommand{\Kcoh}{\operatorname{Kcoh}}
\newcommand{\Acoh}{\operatorname{Acoh}}
\newcommand{\la}{\langle}
\newcommand{\ra}{\rangle}
\newcommand{\Aa}{\mathcal{A}}
\newcommand{\Bb}{\mathcal{B}}
\newcommand{\Oo}{\mathcal{O}}
\newcommand{\Ee}{\mathcal{E}}
\newcommand{\Ff}{\mathcal{F}}
\newcommand{\Ss}{\mathcal{S}}
\newcommand{\Yy}{\mathcal{Y}}
\newtheorem{theorem}{Theorem}[section]
\newtheorem{proposition}[theorem]{Proposition}
\newtheorem{definition}[theorem]{Definition}
\newtheorem{remark}[theorem]{Remark}
\newtheorem{conjecture}[theorem]{Conjecture}
\numberwithin{equation}{section}
\begin{document}

\title[General Clifford double mirrors]{On derived equivalence of general Clifford double mirrors}

\begin{abstract}
We show that general Clifford double mirrors constructed in \cite{BL16} are derived equivalent.
\end{abstract}

\author{Zhan Li}
\address{Beijing International Center for Mathematical Research\\ Peking University, 
Beijing 100871, China} \email{lizhan@math.pku.edu.cn}

\maketitle

\tableofcontents

\section{Introduction}\label{sec: introduction}
Mirror symmetry originated from the observation in physics that different Calabi-Yau threefolds may provide (physical) compactifications of dual string theories. In particular, all physical predictions of the two theories are the same. Such Calabi-Yau varieties are called mirror pairs. Mathematically, mirror symmetry is reflected by the relations between mirror pairs on Hodge numbers, derived and Fukaya categories, Gromov-Witten invariants, etc.

\medskip

In recent years, much of the attention has been drawn to the double mirror phenomenon, that is, two Calabi-Yau varieties are both mirror to a same Calabi-Yau variety (also called multiple mirror phenomenon in \cite{CK99}). In this scenario,  properties of double mirror Calabi-Yaus can be read off from mirror symmetry predictions. For example, their $(p,q)$-stringy Hodge numbers should be same as they should both equal to the $(n-p, q)$-stringy Hodge number of their common mirror; their derived categories are expected to be equivalent, because according to homological mirror symmetry conjecture \cite{Kon94}, they are both equivalent to the Fukaya category of their mirror.

\medskip

These properties have been studied for various known double mirror pairs. For the Batyrev-Borisov double mirrors, the equality of their stringy Hodge numbers is a consequence of the main result of \cite{BB96} (Theorem 4.15), their derived equivalence are confirmed for the corresponding stacks in \cite{FK16} (Theorem 6.3), and their birationality has been proved under some mild assumptions in \cite{Li13} (Theorem 4.10). The analogous results for Berglund-H\"ubsch-Krawitz mirrors have been established in \cite{CR11, FK16b, Sho12, Bor13, Kel13, Cla13}.

\medskip

Mirror symmetry for Calabi-Yau varieties has been generalized to Landau-Ginzburg model (LG-model) and Calabi-Yau (or Fano) correspondence. LG-model $(X, w)$ consists a variety $X$ and a regular function $w$ on it which is called potential. On the level of derived category, LG-model corresponds to derived matrix factorization category which is equivalent to (relative) singular category of zero locus of $w$. There also exits double mirror phenomenon in this case. In the Givental's LG/Fano setting, Prince shows that for Fano complete intersections in toric varieties, certain Laurent polynomial multiple mirrors are related by a mutation (\cite{Pri}, c.f. \cite{CKP15} Theorem 5.1). A similar result also appears in \cite{HD15} Theorem 2.24.

\medskip

Besides above physics considerations, there are a number of sporadic examples \cite{Muk88, Kuz08, CDHPS10, Add09,  CT14} in the literature involving derived equivalent noncommutative varieties which postulates their connections with double mirror phenomenon. It is this observation that motives our work \cite{BL16} to uncover the toric geometry underlings of such examples and relates them to mirror symmetry. 

\medskip

In \cite{BL16}, we work in a slightly more general setting of Batyrev-Borisov construction where we consider a pair of reflexive cones (see Definition \ref{def: reflexive cones}). It has been known that a decomposition of the degree element of a reflexive cone with coefficients $1$ will result in Batyrev-Borisov double mirrors (i.e. two Calabi-Yau complete intersections as double mirrors in the Batyrev-Borisov construction). We generalize this by allowing coefficients $1/2$ in the decomposition of degree element, and construct a noncommutative variety (stack) $(\Ss, \Bb_0)$ associated to it, where $\Ss$ is a complete intersection in a Fano toric variety and $\Bb_0$ is a noncommutative sheaf of algebra. Such construction depends on a parameter $r$ which ``counts'' how many terms with coefficients $1/2$. When $r=0$, then $\Bb_0=0$, and we are back to the Batyrev-Borisov situation where the noncommutative variety is just a commutative Calabi-Yau variety. However, when $r>0$, $\Bb_0$ is non trivial and $\Ss$ is no longer Calabi-Yau. In this case, $(\Ss, \Bb_0)$ can be viewed as a noncommutative Calabi-Yau variety and we call it (general) Clifford mirror. One main result of \cite{BL16} (Theorem 6.3) is that when $r$ archives its extreme values (i.e. the complete intersection and pure Clifford mirror cases), the corresponding double mirrors are derived equivalent. We conjectured further that for general $r$, the corresponding general Clifford double mirrors should also pass the double mirror tests. Especially, under some appropriate conditions, no matter which $r$ is chosen, they are all derived equivalent (\cite{BL16} Conjecture 7.5). The goal of present paper is to give an affirmative answer to that conjecture (Theorem \ref{thm: main}).

\medskip

We briefly discuss the content of each section. In Section \ref{sec: construction}, we explain the construction of general Clifford mirror in \cite{BL16}. During that course, we give necessary combinatoric definitions and fix the notation. In Section \ref{sec: derived equivalence}, we give a proof for derived equivalence of such Clifford double mirrors. It relies on Shipman, Isik, Hirano's result on Kn\"orrer periodicity \cite{Shi12, Isi13, Hir16} and homological variations of GIT quotients \cite{BFK12, HL15}. The section ends up with remarks on possible ways to check other relations between general Clifford double mirrors. In Section \ref{sec: examples}, we give examples of general Clifford double mirrors and heuristic explanations of the derived equivalence in such situations. 

\medskip
\noindent{\it Acknowledgements.} This paper can be viewed as a follow-up to the joint paper \cite{BL16} with Lev Borisov. The author thanks Lev Borisov for reading the manuscript and providing useful suggestions. In fact, Proposition \ref{prop: base point free} is due to him which removes an unnecessary assumption in the first version. The author also benefits from constructive comments of Yongbin Ruan and discussions with Feng Qu and Evgeny Mayanskiy. The work is partially supported by Chenyang Xu's grant.

\section{The construction of general Clifford double mirrors}\label{sec: construction}

In this section, we recall the construction of general Clifford double mirrors given in \cite{BL16} (see Section 7), and fix the notation used throughout the paper. We begin with some combinatoric definitions.

\medskip

Let $M \cong \Zz^{\rk M}$ be a lattice and let $N:=\Hom_\Zz(M, \Zz)$ be its dual lattice. The natural pairing is given by
\[\la , \ra : M \times N \to \Zz.\]
Let $M_\Rr:= M \otimes_\Zz \Rr, N_\Rr:= N \otimes_\Zz \Rr$ be the $\Rr$-linear extensions of $M, N$. The pairing can be $\Rr$-linearly extended, and we still use $\la , \ra$ to denote this extension.

\begin{definition}
A \emph{rational polyhedral cone} $K \subset M_\Rr$ is a convex cone generated by a finite set of lattice points. We assume that $K\cap (-K)=\{\0\}$.
We call the first lattice point of a ray $\rho$  of $K$ a \emph{primitive element} or a \emph{lattice generator} of $\rho$.
\end{definition}

\begin{definition}[\cite{BB94}]\label{def: reflexive cones}
A full-dimensional rational polyhedral cone $K \subset M_\Rr$ is called a \emph{Gorenstein cone} if all the primitive elements of its rays lie on some hyperplane $\la - , \deg^\vee \ra =1$ for some \emph{degree element} $\deg^\vee$ in $N$.
A Gorenstein cone $K \subset M_\Rr$ is called a \emph{reflexive Gorenstein cone} iff its \emph{dual cone} $K^\vee: =\{y \mid \la x, y \ra
\geq 0 ~{\rm for~all}~ x \in K\}$ is also a Gorenstein cone with respect to the dual lattice $N$.
\end{definition}

\begin{definition}
For a pair of dual reflexive Gorenstein cones $(K,K^\vee)$, the pairing of their two degree elements
$\la \deg, \deg^\vee \ra = k$ is called the \emph{index of the pair}. The index is
always a positive integer.
\end{definition}

We consider a pair of reflexive Gorenstein cones $K$ and $K^\vee$
in lattices $M$ and $N$ with degree elements $\deg\in K$
and $\deg^\vee\in K^\vee$  respectively.  Suppose the index of this pair of Gorenstein cones is $k=\la \deg,\deg^\vee\ra$.
In addition, we consider a generic coefficient function
\begin{equation}\label{eq: coefficient function}
c:K_{(1)}\to \Cc,
\end{equation} where $K_{(1)}: = \{m \in K \cap M \mid \la m, \deg^\vee \ra = 1 \}$.

\medskip

As explained in \cite{BL16} (see Section 2 and 7), a decomposition of the degree element $\deg^\vee$ as a summation of lattice elements encompasses the data for toric double mirrors. For example, if $\deg^\vee$ can be expressed as a linear combination of elements in $K^\vee_{(1)}$ with coefficients $1$, then all such decomposition will result in the Batyrev-Borisov double mirrors (see \cite{Li13} Theorem 3.4); and if the linear combination is of coefficients $1/2$, they will correspond to the pure Clifford double mirrors. This justifies the following consideration of both types of coefficients. 

\medskip
Suppose that we have
\begin{equation}\label{eq: decomposition of degree}
\deg^\vee =  \frac{1}{2}(s_1 + \cdots + s_{2r}) + t_1 + \cdots
+ t_{k-r}
\end{equation}
for some $0\leq r\leq k$, with $s_i, t_j \in K^\vee \cap N$. The $(k+r)$ elements $s_i$ and $t_j$ are assumed to be
linearly independent. In addition, there should exist (and be chosen) a
\emph{regular simplicial} fan $\Sigma$ (see \cite{CLS11} Definition 15.2.8) with support  $K^\vee$
such that the following \emph{centrality condition} holds (see \cite{BL16} (7.1)).
\begin{equation}\label{eq: centrality}
\emph{All maximum dimensional cones of $\Sigma$
contain $\{s_i,t_j\}$ as ray generators.} 
%\tag{$\star$} 
\end{equation}

%%%%%%%%%%%%%%%%%%%%%%%%%%%%%%%%%%%%%

We follow the usual process to define toric stacks (see \cite{BCS05}). First, we consider the Cox open subset $U_\Sigma$ of $\Cc^{K^\vee_{(1)}}$
of functions
\begin{equation}\label{eq: Cox open set}
{\bf z}:K^\vee_{(1)}\to \Cc
\end{equation}
such that the preimage of $0$ is a subset of $\Sigma$. For simplicity, let $\{s_i; t_j\}$ denote the set 
$\{s_1,\ldots, s_{2r}, t_1, \ldots, t_{k-2r}\}$. We can similarly consider the
subset
\[
U_{\overline\Sigma} \subset \Cc^{K^\vee_{(1)}-\{s_i,t_j\}}
\]
that corresponds to the stacky fan $\overline\Sigma$ for the group
\begin{equation}\label{eq: bar N}
\overline N  = N /\Zz s_1 + \cdots + \Zz s_{2r}+ \Zz \deg^\vee+ \Zz
t_1+\cdots+ \Zz t_{k-r}. 
\end{equation}
We have
\[
U_\Sigma = U_{\overline\Sigma} \times \Cc^{2r}\times \Cc^{k-r},
\]
where the last two components correspond to coordinates of ${\bf z}$ at $s_i$ and $t_i$.

\medskip

There is a group $\hat G$ defined by
\begin{equation}\label{eq: G hat}
\hat G:=\{ { \bf \lambda}:K^\vee_{(1)}\to \Cc^*\Big\vert
\prod_{n\in K^\vee_{(1)}} \lambda(n)^{\la m,n\ra} = 1,~{\rm for~all~}m\in {\rm Ann}(\deg^\vee)\},
\end{equation} where ${\rm Ann}(\deg^\vee):=\{m \in M \mid \la m, \deg^\vee\ra =0\}$. The group $\hat G$ acts naturally on $U_{\Sigma}$. It has a subgroup $H \subset \hat G$ which is isomorphic to $\Cc^*$ with action
\begin{equation}\label{eq: H = C*}
\lambda(s_i) = t, \lambda(t_i) = t^2, \lambda(v)=1,{\rm ~for~all~}v\in K^\vee_{(1)}-\{s_1,\ldots,s_{2r},
t_1,\ldots,t_{k-r}\}.
\end{equation}
It is shown in \cite{BL16} Section 5 that the toric DM stack corresponding to $(\overline N,\overline\Sigma)$ can be realized as the quotient of $U_{\overline\Sigma}$ by 
\begin{equation}\label{eq: G-overline}
\overline G= \hat G/H.
\end{equation}

Moreover, there exists (non-unique) isomorphism $\hat G \cong \overline G \times H$ (\cite{BL16} Remark 5.1).

\medskip
There is a $\overline G$-invariant polynomial which will be called \emph{potential} in the sequel
\begin{equation}\label{eq: potential}
C({\bf z})=\sum_{m\in K_{(1)} }c(m)  \prod_{n\in
K^\vee_{(1)}} \zz(n)^{\la m,n\ra}.
\end{equation}
The potential $C(\zz)$ is of total degree $2$ with respect to $H \cong \Cc^*$. It can be further written as 
\begin{equation}\label{eq: decomposition of C}
C(\zz) = C_1(\zz) + C_2(\zz),
\end{equation} where $C_1(\zz)$ is a linear term in ${\bf z}(t_i)$, and $C_2(\zz)$ is a quadratic term in $\zz(s_i)$. More precisely,
\begin{equation}\label{eq: C_1}
C_1(\zz) =  \sum_{m\in K_{(1)} \cap {\rm
Ann}(s_1,\ldots,s_{2r})} c(m) \prod_{n\in  K^\vee_{(1)}} \zz(n)^{\la
m,n\ra},
\end{equation} 
\begin{equation}\label{eq: C_2} 
C_2({\bf z}) = \sum_{m\in K_{(1)} \cap {\rm
Ann}(t_1,\ldots,t_{k-r})} c(m) \prod_{n\in  K^\vee_{(1)}} \zz(n)^{\la
m,n\ra}.
\end{equation}
If we let 
\begin{equation}\label{eq: f_i}
\begin{split}
f_i &= \sum_{m\in K_{(1)} \atop \la m,
t_i\ra =1}c(m) \prod_{n\in  K^\vee_{(1)} - \{t_i\}} \zz(n)^{\la
m,n\ra}\\
& = \sum_{m\in K_{(1)} \atop \la m,
t_i\ra =1}c(m) \prod_{n\in  K^\vee_{(1)} - \{t_i; s_{j}\}} \zz(n)^{\la
m,n\ra}
\end{split}
\end{equation} then 
\begin{equation}\label{eq: decomposition of C_1}
C_1(\zz) = \sum_{i=1}^{k-r} \zz(t_i) f_i.
\end{equation}
Indeed, by the decomposition \eqref{eq: decomposition of degree} and the choice of $m \in K_{(1)}$, we see that if $\la m, t_i \ra =1$, then $\la m, s_l \ra = \la m, t_j \ra = 0$ for all $s_l$ and $t_j \neq t_i$. It is straightforward to verify that $C_2(\zz)$ and $\zz(t_i) f_i$ are both $\hat G$-semiinvariant section with character 
\begin{equation}\label{eq: chi}
\chi(\lambda) = \prod_{n \in K^\vee_{(1)}}\lambda(n)^{\la \alpha, n \ra},
\end{equation} where $\alpha \in M$ is any lattice point satisfying $\la \alpha, \deg^\vee \ra=1$. 

\medskip

One can check that $\hat G$ is generated by $H$ and another subgroup 
\[G': = \{ { \bf \lambda}:K^\vee_{(1)}\to \Cc^*\Big\vert
\prod_{n\in K^\vee_{(1)}} \lambda(n)^{\la m,n\ra} = 1,~{\rm for~all~}m\in M\}.\]

By definition of $G'$, $\zz(t_i)f_i$ is $G'$-invariant, and for any $\lambda \in H \cong \Cc^*$ (see \eqref{eq: H = C*}), we have $\lambda \cdot (\zz(t_i)f_i) = \lambda(\zz(t_i))(\zz(t_i)f_i) = \chi(\lambda) (\zz(t_i)f_i)$, hence 
\begin{equation}\label{eq: chi = lambda}
\chi(\lambda) = \lambda(\zz(t_i)), 1 \leq i \leq k-r.
\end{equation} In particular, this shows that $f_i$ is $\hat G$-invariant.

\medskip

We use $f_i$ to define an
intersection $ Y\subset U_{\overline\Sigma}$ by
\begin{equation}\label{eq: Y}
Y=\bigcap_{i=1}^{k-r} \{f_i = 0\}.
\end{equation} It can be viewed as the zero locus of the section $f:=(f_1, \ldots, f_{k-r}) \in H^0(U_{\Sigma}, \oplus_{i=1}^{k-r} \Oo_{U_{\Sigma}})$. Moreover, $Y$ is $\overline G$-invariant, and will be proved to be a complete intersection in Proposition \ref{prop: regularity}. 

\medskip

We then define $\Ss$ as the quotient stack $[Y/\overline G]$. The sheaf of
(even part of) Clifford algebras  $\Bb_0$ on $\Ss=[Y/\overline G]$ is defined by using the quadratic part $C_2({\bf
z})$ of $C({\bf z})$. We formulate its definition as follows.

\begin{definition}[see \cite{Kuz08} Section 3]\label{def.cliff.gen}
Let the quadric $C_2(\zz)$ be a section of ${\rm Sym}^2 (\oplus_{i=1}^{2r} \Oo_Y \cdot z^\vee_i)^\vee$, where $z^\vee_i$ are noncommuting variables. Then the even part of sheaf of Clifford algebra over $Y\subseteq U_{\overline\Sigma}$ is defined to be the noncommutative locally constant sheaf of algebra
\[
\Big(\Oo_{Y}\{z^\vee_1,\ldots,z^\vee_{2r}\}/\la (v \otimes v' + v' \otimes v - 2 C_2(v,v'),~~{\rm for~all~} v,v' \in \oplus_{i=1}^{2r} \Cc z_i^\vee)\Big)_{even},
\] where \emph{even} refers to elements of even degrees in $z_i^\vee$. It has a natural $\hat G$-equivariant structure which descends to $\hat G/H={\overline G}$ (see \cite{BL16} Remark 7.4). Then $\Bb_0$ on $[Y/\overline G]$ is defined to be above even part of sheaf of Clifford algebra over $Y\subseteq U_{\overline\Sigma}$ with $\overline G$-equivariant structure.
\end{definition}

\begin{definition}[\cite{BL16} Section 7]\label{def: general Clifford double mirrors}
For arbitrary $r, 0 \leq r \leq 2k$, we call noncommutative pair $(\Ss, \Bb_0)$ a \emph{general Clifford mirror}. The sheaf of Clifford algebra $\Bb_0$ should be viewed as the structure sheaf of this general Clifford mirror.
\end{definition}

This definition generalizes the \emph{pure Clifford mirror} (i.e. $r=2k$ case) considered in \cite{BL16} Section 5 to a complete intersection in a toric stack. The main conjecture of \cite{BL16} is that for a fixed reflexive Gorenstein cone and varied $r$, all the decompositions \eqref{eq: decomposition of degree} will give double mirrors. In terms of homological mirror symmetry, this can be formulated as follows.
\begin{conjecture}\label{conj: double mirrors}
Under the centrality and appropriate flatness assumptions, the bounded derived categories of coherent sheaves on $(\Ss,\Bb_0)$ are all equivalent.
\end{conjecture}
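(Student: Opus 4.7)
The plan is to realize $D^b(\Ss,\Bb_0)$, for \emph{every} admissible decomposition \eqref{eq: decomposition of degree}, as the category of singularities of a single gauged Landau-Ginzburg model that depends only on the pair $(K,K^\vee)$ and the coefficient function $c$. Two ingredients make this possible: Kn\"orrer periodicity in the Shipman-Isik-Hirano form, which converts the LG-side into the Clifford-complete-intersection side, and the variation of GIT technology of Ballard-Favero-Katzarkov and Halpern-Leistner, which compares matrix factorization categories attached to different admissible decompositions and different central fans $\Sigma$.

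First I would fix an admissible decomposition and its fan $\Sigma$, and consider the gauged LG model $(U_{\Sigma},\hat G,C(\zz))$ with $C=C_1+C_2$ as in \eqref{eq: decomposition of C}. Using the $\Cc^{*}$-grading supplied by $H\subset \hat G$ (see \eqref{eq: H = C*}) in which $C$ has weight $2$, the $\overline G$-equivariant category of singularities $\D_{\sg}(U_{\Sigma},\hat G,C)$ is defined. Applying the linear version of Kn\"orrer periodicity \cite{Shi12,Isi13,Hir16} to $C_1=\sum \zz(t_i)f_i$ eliminates the fibre variables $\zz(t_i)$ and identifies this category with the equivariant singularity category of $(Y\times \Cc^{2r},\,C_2|_{Y})$, where $Y\subset U_{\overline\Sigma}$ is the complete intersection cut out by the $f_i$; here Proposition \ref{prop: regularity} provides the regular sequence hypothesis needed for Kn\"orrer. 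A second, quadratic Kn\"orrer step for $C_2$ converts the quadric on the fibres of $\oplus \Oo_{Y}\cdot z_i^{\vee}$ into the even Clifford sheaf $\Bb_0$ of Definition \ref{def.cliff.gen}, yielding
\[
\D^{b}(\Ss,\Bb_0)\;\simeq\; \D_{\sg}(U_{\Sigma},\hat G,C).
\]

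The crux is then to argue that the right-hand side is intrinsic, i.e.\ independent of the chosen decomposition and fan. The potential $C(\zz)$ on $\Cc^{K^\vee_{(1)}}$ is literally the same function for any decomposition of $\deg^\vee$, and the group $\hat G$ in \eqref{eq: G hat} only sees $K^\vee_{(1)}$ and $\Ann(\deg^\vee)$; what changes with the decomposition are (i)~the open set $U_{\Sigma}$ versus $U_{\Sigma'}$, and (ii)~the auxiliary $\Cc^{*}$-grading $H$ chosen to make $C$ semi-invariant of weight $2$. I would embed $[U_{\Sigma}/\hat G]$ and $[U_{\Sigma'}/\hat G]$ as neighbouring GIT chambers in a common ambient Artin stack $[\Cc^{K^\vee_{(1)}}/\hat G]$, and apply the BFK/HL window equivalence \cite{BFK12,HL15} in its matrix-factorization form to compare the two LG models. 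Because both chambers carry the \emph{same} potential, each wall-crossing produces a Fourier-Mukai type equivalence of matrix factorization categories, provided the character-grading shift is chosen so that the windows match; combining with the Kn\"orrer step of the previous paragraph then gives the desired equivalence $\D^{b}(\Ss,\Bb_0)\simeq \D^{b}(\Ss',\Bb_0')$.

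The step I expect to be the main obstacle is controlling what happens when the decomposition actually changes the value of $r$, because at such a wall a given lattice vector switches role between being an $s_i$ (a quadratic direction of $C_2$) and a $t_j$ (a linear direction of $C_1$), so the partition of $C$ into $C_1+C_2$ used in the Kn\"orrer reduction is not preserved. The task will be to verify that the window subcategories prescribed by Halpern-Leistner can be chosen consistently across this change of type, and that the generic flatness assumption in the statement of the conjecture forces the critical locus of $C$ to stay inside the unstable locus throughout the variation, so that the wall-crossing functor remains an equivalence rather than a semiorthogonal inclusion. The centrality condition \eqref{eq: centrality} ensures that every maximal cone of $\Sigma$ contains all the $\{s_i,t_j\}$, which should reduce this verification to a finite combinatorial check on the relevant stars and hence make the windows compatible.
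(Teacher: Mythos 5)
Your plan is, structurally, the paper's own argument: realize each noncommutative mirror as the derived factorization category of a common gauged LG model $(U_{\Sigma},\hat G,\chi,C)$ by a two-step Kn\"orrer reduction --- Kuznetsov's quadric-to-Clifford equivalence for the quadratic part $C_2$ (Proposition \ref{prop: Clifford to MF}) followed by the Shipman--Isik--Hirano ``linear'' Kn\"orrer for $C_1$ (Proposition \ref{prop: Hir16}, using Proposition \ref{prop: regularity} for the regular-sequence hypothesis) --- and then compare the two LG models by a BFK/HL wall-crossing through the secondary fan (Proposition \ref{prop: derived equivalent by VGIT}).

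The obstacle you single out in your last paragraph is, however, not actually there. The VGIT walls one crosses are between triangulations $\Sigma$ and $\tilde\Sigma$ of $K^\vee$ in the secondary fan, not between decompositions of $\deg^\vee$; once you have transported both sides to the categories $\Dcoh_{\hat G}(U_{\Sigma},\chi,C)$ and $\Dcoh_{\hat G}(U_{\tilde\Sigma},\chi,C)$, the integer $r$ and the splitting $C=C_1+C_2$ have been erased and never reappear in the wall-crossing --- what matters for the BFK/HL elementary wall-crossing to give an equivalence (rather than a fully faithful inclusion) is the quasi-symmetry/Calabi--Yau condition, which is guaranteed by $K_{(1)}$ lying on the hyperplane $\la -,\deg^\vee\ra=1$. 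Along similar lines, your point (ii) conflates two gradings: the subgroup $H\subset\hat G$ of \eqref{eq: H = C*} does depend on the decomposition, but the character $\chi$ in \eqref{eq: chi} used to define $\Dcoh_{\hat G}(-,\chi,C)$ is intrinsic --- it depends only on a choice of $\alpha\in M$ with $\la\alpha,\deg^\vee\ra=1$, and that choice is immaterial because any two such $\alpha$ differ by an element of $\Ann(\deg^\vee)$, killed by the definition of $\hat G$. So no compatibility of gradings across decompositions needs to be checked; the proof closes exactly as you outlined.
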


This conjecture holds in the boundary cases.

\begin{theorem}[Theorem 6.3 \cite{BL16}]\label{thm: extreme cases}
The conjecture \ref{conj: double mirrors} is true when $r$ choose $0$ or $2k$.
\end{theorem}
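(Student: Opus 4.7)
The plan is to treat the two extreme cases separately, in each case routing through the gauged Landau--Ginzburg model $(U_\Sigma/\hat G, C(\zz))$ associated to the full potential \eqref{eq: potential}, and then invoking either Isik--Shipman \cite{Shi12, Isi13} or Hirano's Kn\"orrer periodicity \cite{Hir16} to identify the derived category of $(\Ss,\Bb_0)$ with the derived matrix factorization category of the LG model. Two different decompositions \eqref{eq: decomposition of degree} of the same $\deg^\vee$ then appear as two different $\hat G$-linearizations of the \emph{same} ambient LG model, and the Ballard--Favero--Katzarkov / Halpern-Leistner wall-crossing machine \cite{BFK12, HL15} is expected to supply a Fourier--Mukai equivalence between them.

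When $r=0$, the decomposition reads $\deg^\vee=t_1+\cdots+t_k$, the quadratic part $C_2$ vanishes, and by \eqref{eq: decomposition of C_1} we have $C(\zz)=\sum_{i=1}^{k}\zz(t_i) f_i$ linear in the $\zz(t_i)$. Isik--Shipman identifies the derived matrix factorization category of $(U_\Sigma/\hat G, C(\zz))$ with $D^b([Y/\overline G]) = D^b(\Ss)$, so two Batyrev--Borisov double mirrors arise from the same LG data under two choices of which $k$ rays of $\Sigma$ are designated $\{t_i\}$. These two choices correspond to two $\hat G$-characters on $U_\Sigma$, and the wall-crossing of \cite{BFK12, HL15} between the associated GIT quotients yields the derived equivalence. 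This is exactly the argument of \cite{FK16}, Theorem~6.3, in the current toric setting.

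When $r=k$ (the pure Clifford case), no $t_j$'s remain, so $Y=U_{\overline\Sigma}$ and $C(\zz)=C_2(\zz)$ is a quadratic form in the $\zz(s_i)$. Hirano's Kn\"orrer periodicity then identifies the matrix factorization category of the LG model with $D^b(\Ss,\Bb_0)$, where $\Bb_0$ is the even Clifford algebra of Definition~\ref{def.cliff.gen}. Two decompositions with coefficients $1/2$ pick out two different subsets of $2k$ rays as quadratic variables, producing pairs $(\Ss,\Bb_0)$ and $(\Ss',\Bb_0')$ that again realize the matrix factorization category of the same ambient LG model under two different $\hat G$-linearizations. The same wall-crossing therefore produces the required derived equivalence.

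The main obstacle is checking that the wall-crossing functor is a full equivalence rather than merely a semiorthogonal embedding; concretely, one must verify that the weights of the canonical bundle along the two unstable strata of $U_\Sigma$ agree, so that the comparison of \cite{BFK12, HL15} lies in the Calabi--Yau regime. This is where the centrality condition \eqref{eq: centrality} and the linear independence of the $s_i, t_j$ enter: they force each maximal cone of $\Sigma$ to contain the full set $\{s_i, t_j\}$, which makes the relevant $\hat G$-characters on the two unstable loci cancel precisely. Granted this balance, the combination of Isik--Shipman (for $r=0$) or Hirano--Kn\"orrer (for $r=k$) with the wall-crossing equivalence completes the proof of the two extreme cases.
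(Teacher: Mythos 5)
The theorem you are proving is cited from \cite{BL16} and carries no internal proof in this paper; the closest thing to a ``paper's own proof'' is the generalization (Theorem~\ref{thm: main}) in Section~\ref{sec: derived equivalence}, which subsumes both extreme cases. Your overall route---passing through the gauged LG model $(U_\Sigma/\hat G, C(\zz))$ and invoking VGIT wall-crossing \cite{BFK12, HL15}---is the same strategy the paper uses for the main theorem, so the architecture of the proposal is correct.

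Two attributions should be corrected. First, for the pure Clifford case $r=k$ you assign the identification $\D^b(\Ss,\Bb_0)\cong \Dcoh_{\hat G}(V(\Ff_Q),\chi,C_2)$ to Hirano. It is not; it is Kuznetsov's quadric-fibration result together with \cite{BDFIK14} (Proposition~\ref{prop: Clifford to MF} in this paper). Hirano's Kn\"orrer periodicity is the step that absorbs the $\Ff_L$-directions coming from the $t_j$'s, i.e.\ it passes from $\Dcoh_{\hat G}(V(\Ff_Q|_Y),\chi,C_2)$ to $\Dcoh_{\hat G}(V_{U_{\overline\Sigma}}(\Ff_L(\chi)\oplus\Ff_Q),\chi,C)$, and when $r=k$ this step is vacuous since $Y=U_{\overline\Sigma}$ and $\Ff_L=0$. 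Second, the balance condition that upgrades the wall-crossing functor from a semiorthogonal embedding to an equivalence is not secured by centrality and linear independence; it is a consequence of the Gorenstein/reflexivity condition, namely that all of $K_{(1)}$ lies on the hyperplane $\langle -,\deg^\vee\rangle=1$, which is exactly what the paper cites in the proof of Proposition~\ref{prop: derived equivalent by VGIT}. Centrality has a different job: it guarantees that one can quotient to $\overline\Sigma$ and it drives the base-point-freeness in Proposition~\ref{prop: base point free}. With these two corrections the proposal aligns with the argument in the paper.
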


\begin{remark}
For $r=0$,  the noncommutative varieties $(\Ss, \Bb_0)$ are actually the Batyrev-Borisov Calabi-Yau varieties. Hence, Conjecture\ref{conj: double mirrors} can be viewed as a generalization of Batyrev-Nill's conjecture (\cite{BN08} Conjecture 5.3) on Batyrev-Borisov double mirrors which is answered affirmatively by Favero and Kelly \cite{FK16} (Theorem 6.3).
\end{remark}

The goal of this paper is to show that for all $r, 0 \leq r \leq 2k$, Conjecture \ref{conj: double mirrors} still holds. This is a strong evidence that the construction $(\Ss, \Bb_0)$ are indeed double mirrors, and we expect that they should also pass other tests of mirror symmetry.

\section{Derived equivalence of general Clifford  double mirrors}\label{sec: derived equivalence}

\subsection{Derived factorization category and Hirano's result}
The technical tools used to prove Theorem \ref{thm: main} are the relations of derived categories between variation of GIT (see \cite{BFK12, HL15}) and Hirano's analogy of Isik and Shipman's result (see \cite{Shi12, Isi13}) in the matrix factorization categories \cite{Hir16}. First recall the definition of derived matrix factorization categories of \cite{Pos11, EP15} (see \cite{Hir16} Definition 2.10).

\begin{definition}\label{def: factorization}
Let $X$ be a scheme, and $G$ be an affine algebraic group acting on $X$. Let $\chi: G \to G_m$ be a character of $G$, and $W: X \to \mathbb{A}^1$ be a $\chi$-semiinvariant function. A \emph{factorization} $F$ of data $(X, \chi, W, G)$ is a sequence
\[
F= \left(F_1 \xrightarrow{\phi^F_1}  F_0 \xrightarrow{\phi^F_0} F_1(\chi)\right),
\] where $F_i$ are $G$-equivariant coherent sheaves on $X$ and $\phi_i$ are $G$-equivariant homomorphisms. They satisfy the relations $\phi^F_0\circ \phi^F_1 = W \cdot {\rm id}_{F_1}, \phi^F_1(\chi) \circ \phi^F_0 = W \cdot {\rm id}_{F_0}$. A morphism of factorizations $g: E \to F$ is a pair of morphisms $(g_1, g_0)$ that commute 
with $\phi_i^E$ and $\phi_i^F$. We use ${\Coh}_G(X, \chi, W)$ to denote this abelian category of factorizations.
\end{definition} 

There also exists a notion of chain homotopy between morphisms in $\Coh_G(X, \chi, W)$ and we
let $\Kcoh_G(X, \chi, W)$ be the corresponding homotopy category. One can define a natural translation and
cone construction in $\Kcoh_G(X, \chi, W)$. These give a triangulated category structure on the homotopy
category. Let $\Acoh_G(X, \chi, W)$ be the smallest thick subcategory of $\Kcoh_G(X, \chi, W)$ containing all totalizations of
short exact sequences from $\Coh_G(X, \chi, W)$ (see \cite{Hir16} Section 2).

\begin{definition}\label{def: derived factorization category}
The \emph{derived factorization category} of data $(X, \chi, W, G)$ is defined as Verdier quotient
\[
\Dcoh_G(X, \chi, W):= \Kcoh_G(X, \chi, W) / \Acoh_G(X, \chi, W).
\]
\end{definition}

To state Hirano's result (\cite{Hir16} Theorem 4.2), let us fix the following notation. For consistency, it appears slightly different here than that in \cite{Hir16}. 

\medskip

Let $U$ be a smooth quasi-projective variety, $G$ be an affine algebraic
group acting on $U$, $\chi : G \to \mathbb{G}_m$ be a character, and $C_2 : U \to \mathbb{A}^1$
be a $\chi$-semiinvariant regular function. Suppose there is a $G$-equivariant locally free sheaf $\Ee$ over $U$, and a $G$-invariant section $f \in H^0(U, \Ee^\vee)$. Let $Z$ be the zero locus of $f$. We call $f$ to be a \emph{regular} section if the codimension of $Z$ in $U$ is $\rk \Ee$ (see \cite{Hir16} Section 4).  Set $\Ee(\chi)=\Ee \otimes \Oo(\chi)$ for the character $\chi$, and $V_U(\Ee(\chi)):= Spec({\rm Sym}^\bullet (\Ee(\chi)^\vee))$ the corresponding vector bundle with induced $G$-action. Let $q: V_U(\Ee(\chi)) \to U$ and $p: V(\Ee(\chi))|_Z \to Z$ be two natural projections. The regular section $f$ induces a $\chi$-semiinvariant regular function $C_1: V_U(\Ee(\chi)) \to \mathbb{A}^1$. Locally, this means that we associate $f=(f_1, \ldots, f_{\rk \Ee})$ to a function $C_1 = \sum_{i=1}^{\rk \Ee} z_i f_i$, where $z_i$ are indeterminates such that $g \in G$ acting on $z_i$ equals to $\chi(g) z_i$.

\begin{theorem}[\cite{Hir16} Theorem 4.2]\label{thm: Hirano}
Notation as above, assume $C_2|_Z$ is flat and $f \in H^0(U, \Ee^\vee)$ is a regular section, then there is an equivalence
\[
\Dcoh_G(Z, \chi, C_2|_Z)\cong \Dcoh_G(V_U(\Ee(\chi)), \chi, q^*C_2+C_1).
\]
\end{theorem}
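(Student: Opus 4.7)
The theorem is a global Knörrer--periodicity-type equivalence: the added potential $C_1 = \sum_i z_i f_i$ is a natural pairing in the fiber coordinates $z_i$ between the tautological section of $q^*\Ee(\chi)$ and the defining section $f$ of $Z$, and one expects the two derived factorization categories to agree because the critical locus of $C_1$ in the fiber direction of $q$ is precisely the zero locus $Z$ of $f$. The plan is to prove the equivalence by constructing an explicit Fourier--Mukai style kernel built from the global Koszul complex, and then verifying that the induced functor is an equivalence of derived factorization categories.

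Over $V:=V_U(\Ee(\chi))$ there are two natural $G$-equivariant sections: the tautological section $\zz \in H^0(V, q^*\Ee(\chi))$ and the pull-back $q^*f \in H^0(V, q^*\Ee^\vee)$. Their pairing equals $C_1$, so the associated Koszul-type complex
\[
K := \bigl(\wedge^\bullet q^*\Ee,\; \iota_{\zz} + f\wedge\cdot\bigr)
\]
is a $G$-equivariant matrix factorization of $C_1$ on $V$, with the product of its two differentials equal to $C_1 \cdot \mathrm{id}$. I would then define
\[
\Phi: \Dcoh_G(Z, \chi, C_2|_Z) \to \Dcoh_G(V, \chi, q^*C_2 + C_1), \qquad \Phi(E) := j_*(E) \otimes_{\Oo_V} K,
\]
where $j:Z \hookrightarrow V$ is the closed embedding given by composing the zero section $U\hookrightarrow V$ with $Z \hookrightarrow U$, and a candidate inverse $\Psi$ built from restriction of factorizations to $V|_Z = p^{-1}(Z)$ followed by the flat pushforward along the vector-bundle projection $p:V|_Z\to Z$, paired with the dual of $K$ to absorb the vanishing of $C_1$ on $V|_Z$.

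Adjointness of $(\Phi,\Psi)$ should be formal from the projection formula and flat base change along $j$. The main obstacle, and the step requiring the two hypotheses of the theorem, is showing that the unit and counit are isomorphisms; this reduces to computing the self-convolution of $K$ on the diagonal of $V\times_U V$. Regularity of $f$ is used crucially here: it guarantees that the ordinary Koszul complex $(\wedge^\bullet \Ee, \iota_f)$ on $U$ is a resolution of $\Oo_Z$, and this in turn identifies the self-convolution of $K$ with the identity kernel in the derived factorization category. Flatness of $C_2|_Z$ enters at the same stage to ensure that the spectral sequence obtained by filtering by the $\Ee$-degree and separating the contributions of $C_1$ and $q^*C_2$ degenerates as needed, so that the equivalence established for the linear potential $C_1$ extends to the full potential $q^*C_2 + C_1$. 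Once the Koszul self-convolution is under control, the equivalence of categories is immediate.
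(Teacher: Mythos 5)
This theorem is not proved in the paper: it is quoted verbatim as Hirano's Theorem~4.2 from \cite{Hir16}, and the paper simply applies it. So there is no in-text proof to compare your proposal against; you are effectively trying to reprove Hirano's theorem from scratch.

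Your Koszul-kernel strategy is certainly the right family of ideas---this is how Isik \cite{Isi13}, Shipman \cite{Shi12}, and Hirano himself approach such Kn\"orrer-type equivalences---but as written your proposal is an outline that asserts rather than proves the central point. A few concrete issues: (i) the underlying sheaf of your kernel is off by a dualization: with $\zz$ a section of $q^*\Ee(\chi)$ and $f$ a section of $q^*\Ee^\vee$, the operators $\iota_{\zz}$ and $f\wedge(-)$ both act on $\wedge^\bullet q^*\Ee^\vee$ (with twists), not on $\wedge^\bullet q^*\Ee$; equivalently, on $\wedge^\bullet q^*\Ee$ the differential should be $\iota_f + \zz\wedge(-)$. (ii) The potential bookkeeping in $\Phi(E) := j_*(E)\otimes K$ is ambiguous: because $q^*C_2$ and $q^*C_2+C_1$ both restrict to $C_2|_Z$ on $j(Z)$, the object $j_*(E)$ is simultaneously a factorization of either; to land in $\Dcoh_G(V,\chi,q^*C_2+C_1)$ you must declare $j_*E$ to be a factorization of $q^*C_2$ before tensoring with $K$, and this needs to be stated and justified. (iii) ``Once the Koszul self-convolution is under control, the equivalence is immediate'' conceals exactly the technical substance of the theorem; showing that the unit and counit are isomorphisms---carefully, in the equivariant derived \emph{factorization} category, where support and completion issues are subtle---is precisely what Hirano's proof is about, and you do not indicate how to do it. (iv) Your account of where flatness of $C_2|_Z$ enters (``the spectral sequence \ldots{} degenerates as needed'') is vague and does not match how the hypothesis is actually used; in the references it is needed to control the behavior of the pushforward/restriction functors between factorization categories, not merely to degenerate a filtration spectral sequence. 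In short: correct general direction, but as a proof of Hirano's theorem the proposal is incomplete at its critical step, and in any case this is not a statement the paper itself proves.
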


\subsection{Proof of the main theorem}
Suppose there are two decompositions as \eqref{eq: decomposition of degree}
\[\begin{split}
\deg^\vee &= t_1 + \ldots + t_{k-r}+\frac 1 2 (s_1 + \ldots + s_{2r})\\
&=\tilde t_1 + \ldots + \tilde r_{k-l}+\frac 1 2 (\tilde s_1 + \ldots + \tilde s_{2l}),
\end{split}\]
and respective regular triangulations $\Sigma, \tilde \Sigma$ which satisfy centrality \eqref{eq: centrality}. Then, there exist general Clifford double mirrors $(\Ss, \Bb_0)$ and $(\tilde \Ss, \tilde \Bb_0)$ as constructed in Section \ref{sec: construction}. In order to show their derived equivalence, we first employ a derived version of Cayley trick, that is, we associate each derived category to a derived matrix factorization category with common potential $C(\zz)$; then a homological variation of GIT argument for derived categories will establish the desired equivalence. 

\medskip

Let us first work with $\deg^\vee = t_1 + \ldots + t_{k-r}+\frac 1 2 (s_1 + \ldots + s_{2r})$. The other decomposition can be treated analogously. We will show that the regularity assumption in Theorem \ref{thm: Hirano} is satisfied for the generic $Y$ in the general Clifford mirror construction. The following proposition is due to Lev Borisov.

\begin{proposition}\label{prop: base point free}
For each $1 \leq i \leq k-r$, the linear system 
\[L_i:=\{\sum_{m \in K_{(1)}, \la m, t_i \ra =1} c(n) \prod_{n \in K_{(1)}-\{s_i; t_j\}}\zz(n)^{\la m, n \ra} \mid c(n) \in \Cc\}
\]  is base point free on $U_{\overline \Sigma}$.
\end{proposition}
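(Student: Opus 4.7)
The plan is to translate base-point-freeness into a combinatorial statement about lattice points in $K$ and then to produce the required lattice point directly from a maximal cone of $\Sigma$, using the centrality condition \eqref{eq: centrality} together with the regularity of $\Sigma$.

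First I would reinterpret the base-point condition. For $p \in U_{\overline\Sigma}$ with vanishing set $S := \{n \in K^\vee_{(1)} - \{s_j; t_l\} : \zz(n)(p) = 0\}$, the monomials indexed by $m \in K_{(1)}$ with $\la m, t_i \ra = 1$ all vanish at $p$ precisely when, for each such $m$, some $n \in S$ has $\la m, n \ra > 0$. By definition of $U_{\overline\Sigma}$, $\Cone(S)$ is a cone of $\overline\Sigma$; the centrality hypothesis \eqref{eq: centrality} then lifts this to a cone of $\Sigma$ whose ray set contains $\{s_j;t_l\}$, and by enlarging $S$ if necessary I may assume that $\sigma := \Cone(S \cup \{s_j;t_l\})$ is a maximal cone of $\Sigma$. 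Thus the proposition reduces to showing: for each such $\sigma$ there exists a lattice point $m \in K_{(1)} \cap M$ with $\la m, t_i \ra = 1$ and $\la m, n \ra = 0$ for every ray $n$ of $\sigma$ other than $t_i$.

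Next I would construct the candidate $m$ directly from $\sigma$. Regularity of $\Sigma$ makes the rays of $\sigma$ a $\Zz$-basis of $N$, so there is a unique dual basis element $m_\sigma \in M$ characterized by $\la m_\sigma, t_i \ra = 1$ and $\la m_\sigma, n \ra = 0$ for every other ray $n$ of $\sigma$ (in particular for each $s_j$, each $t_l$ with $l \neq i$, and each $n \in S$). Evaluating the decomposition \eqref{eq: decomposition of degree} then yields
\[
\la m_\sigma, \deg^\vee \ra = \tfrac{1}{2}\sum_{j}\la m_\sigma, s_j\ra + \sum_{l}\la m_\sigma, t_l\ra = 1,
\]
so $m_\sigma$ has the correct degree and gives the required $m$ the moment I know $m_\sigma \in K$.

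The main obstacle is therefore the membership $m_\sigma \in K$, equivalently $\la m_\sigma, \rho \ra \geq 0$ for every ray $\rho$ of $K^\vee$. The strategy I would follow is a support-function argument: assemble the local linear functions $\la m_\sigma, -\ra$ into a piecewise linear function $\phi_i : K^\vee \to \Rr$ by declaring $\phi_i|_\sigma = \la m_\sigma, -\ra$ on each maximal cone $\sigma$ of $\Sigma$. This $\phi_i$ is well defined because on any ray shared by two adjacent maximal cones $\sigma, \sigma'$ both $m_\sigma$ and $m_{\sigma'}$ take the common value $\delta_{\rho, t_i}$. Centrality then forces $\phi_i \geq 0$ on all of $K^\vee$: any $v \in K^\vee$ lies in some maximal cone that, by \eqref{eq: centrality}, contains $t_i$ as a ray, and $\phi_i(v)$ is simply the $t_i$-coordinate of $v$ in that cone's basis, which is nonnegative. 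The harder step is convexity of $\phi_i$ in the toric sense (i.e.\ $\phi_i \leq \la m_\sigma, -\ra$ on $K^\vee$); this is a wall-by-wall check which reduces, via the wall relation in the regular simplicial fan $\Sigma$, to controlling the sign of the $t_i$-coefficient expressing a ray of an adjacent maximal cone in $\sigma$'s basis, and this is where the decomposition \eqref{eq: decomposition of degree} together with the linear independence of $\{s_j, t_l\}$ truly enters. Once convexity of $\phi_i$ is established, combining it with $\phi_i \geq 0$ gives $\la m_\sigma, -\ra \geq \phi_i \geq 0$ on all of $K^\vee$, hence $m_\sigma \in K$, and the proposition follows.
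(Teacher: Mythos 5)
Your setup is sound and in fact matches the paper's up to the crucial step: you reinterpret base-point-freeness correctly, reduce to a single maximal cone $\sigma$ via centrality, construct the dual-basis element $m_\sigma$ (regularity of $\Sigma$), and verify $\la m_\sigma, \deg^\vee\ra = 1$. The proposal stalls exactly where you flag the ``harder step'': showing $m_\sigma \in K$.

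The support-function detour does not actually make progress. Note that, since every interior wall of $\Sigma$ must contain $t_i$ (by centrality: a cone adjacent across a wall not containing $t_i$ would have to contain $t_i$ and hence equal $\sigma$), wall-by-wall convexity at a wall $\tau = \sigma\cap\sigma'$ amounts precisely to $\la m_\sigma, n'\ra \geq 0$, where $n'$ is the unique ray of $\sigma'$ not in $\tau$. That inequality is exactly a piece of the statement ``$m_\sigma$ is nonnegative on $K^\vee$'', and convexity for all walls adjacent to $\sigma$ plus $\phi_i\geq 0$ is equivalent to $m_\sigma\in K$. So the convexity you need to check \emph{is} the thing to be proved, merely relocalized; and your claim that the sign of the $t_i$-coefficient $b_{t_i}$ in the wall relation is controlled by the decomposition \eqref{eq: decomposition of degree} and linear independence of $\{s_j,t_l\}$ is asserted, not demonstrated. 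I do not see how those hypotheses alone force $b_{t_i}\geq 0$ at an interior wall; without that, the argument is circular.

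The missing idea, and the paper's key observation, is simpler and bypasses $\phi_i$ entirely: the facet $\theta$ of $\sigma$ spanned by all of its rays \emph{except} $t_i$ must lie on the boundary of $K^\vee$. Indeed, if $\theta$ were an interior wall, the adjacent maximal cone $\sigma'$ would contain $\theta\supset\{s_j;t_l\}\setminus\{t_i\}$ and, by centrality, also $t_i$, forcing $\sigma'=\sigma$, a contradiction. Therefore $\theta$ sits inside a facet $\Theta$ of $K^\vee$, whose dual is a ray of $K$ generated by some $m\in K_{(1)}$; this $m$ kills $\theta$, pairs to $1$ with $t_i$ (by evaluating against $\deg^\vee$), and is \emph{automatically nonnegative on all of $K^\vee$} because it is the supporting functional of a facet. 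By uniqueness of the dual basis, $m=m_\sigma$, giving $m_\sigma\in K$ directly, with no need for a global convexity claim.
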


\begin{proof}
Let $F_m = \prod_{n \in K_{(1)}-\{s_i; t_j\}}\zz(n)^{\la m, n \ra}$ for $m \in \{m \in K_{(1)}\mid \la m, t_i \ra =1\}$. It suffices to show that there is no common zeros for all monomial functions $F_m$. $F_m$ is non-zero on $(\Cc^*)^{\#\Ver(\overline \Sigma)} \subset U_{\overline \Sigma}$, hence we only need to consider zero loci on the boundary divisors $\tilde D_n :=\{\zz(n) = 0\} \subset U_{\overline \Sigma}$ for $n \in K_{(1)}-\{s_i; t_j\}$. By direct computation, the zero divisor of $F_m$ is 
\[
{\rm div}_0(F_m) = \sum_{n \in K_{(1)}-\{s_i; t_j\}} \la m, n \ra \tilde D_{n}.
\] Assume the opposite.
If there were a closed point $z\in U_{\overline \Sigma}$ on which all $F_m$ are zero,
we may assume that the set of zeroes of $z$ lies in the maximum cone
$\bar\sigma$ of ${\overline \Sigma}$. This implies that the pairing $\la m,n \ra$ is positive for
at least
one $n$ which is the preimage of a generator of ${\overline \Sigma}$ under $\Sigma \to \overline \Sigma$. Indeed,
otherwise, the corresponding monomial $F_m$ does not involve any of the
coordinates that vanish on $z$.

\medskip

Let $n_1,\ldots, n_d$ be the preimages in $K^\vee_{(1)}$ of the generators of
$\bar\sigma$. Then the preimage $\sigma$ of $\bar\sigma$ is the maximum
cone with generators 
\[n_1,\ldots,n_d,s_1,\ldots, s_{2k}, t_1,\ldots,
t_{r-k}.
\] Consider the facet $\theta$ of this cone generated by all of the
above elments, except $t_i$. This facet $\theta$ is a part of a facet of
$K^\vee$. Indeed, otherwise, it would be in the interior of $K^\vee$ but
then points on the opposite side of $t_i$ can not be in any cone that contains
$\deg^\vee$. This contradicts centrality.

\medskip

The dual face of $\theta$ is generated by
some point $m\in K_{(1)}$. We must have $\la m,t_i \ra \neq 0$, since otherwise
$m$ is orthogonal to all of $\sigma$ and must be $0$. Therefore, we have
$\la m,t_i \ra=1$ and $\la m,n_j \ra=0$ for all $j$ by the definition of the
dual face. Thus this $m$ gives a monomial $F_m$ which is nonzero on $z$. This is a
contradiction.
\end{proof}

The upshot of the above discussion is the desired regularity property of $f=(f_1, \ldots, f_{k-r}) \in H^0(U_{\overline \Sigma}, \oplus_{i=1}^{k-r}\Oo_{\overline \Sigma})$ on $U_{\overline \Sigma}$ (see Section \ref{sec: construction} for notation and \eqref{eq: chi = lambda} for the fact that $f_i$ is $\hat G$-equivariant).

\begin{proposition}\label{prop: regularity}
For a general coefficient function $c$, $f$ is a regular section on $U_{\overline \Sigma}$ in the sense of Theorem \ref{thm: Hirano}, that is, $Y$ in \eqref{eq: Y} is a complete intersection of codimension $k-r$.
\end{proposition}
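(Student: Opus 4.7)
The plan is to deduce regularity of the section $f=(f_1,\ldots,f_{k-r})$ from the base point freeness established in Proposition \ref{prop: base point free} via a standard Bertini/dimension-count argument, applied inductively on $k-r$.

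First I would note that the coefficient data for different $f_i$ are independent. If $m\in K_{(1)}$ satisfied $\la m,t_i\ra =1$ and $\la m,t_j\ra=1$ for $i\neq j$, then from \eqref{eq: decomposition of degree} we would have
\[
\la m,\deg^\vee\ra \geq \la m, t_i\ra + \la m,t_j\ra = 2,
\]
contradicting $m\in K_{(1)}$. So the monomials appearing in different $f_i$'s have pairwise disjoint index sets, and a generic choice of coefficient function $c$ yields simultaneously generic members of each linear system $L_i$.

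Next I would set $Y_0 = U_{\overline{\Sigma}}$ and $Y_j = Y_{j-1}\cap\{f_j=0\}$, and show by induction on $j$ that for a general choice of $(f_1,\ldots,f_j)$ the subscheme $Y_j$ has pure codimension $j$ in $U_{\overline{\Sigma}}$. The inductive step is where Proposition \ref{prop: base point free} enters: since $L_j$ is base point free on $U_{\overline{\Sigma}}$, its restriction to any nonempty subscheme $Y_{j-1}$ is base point free on $Y_{j-1}$, so in particular no single section of $L_j$ vanishes identically on any irreducible component of $Y_{j-1}$. The locus of sections of $L_j$ that vanish identically on a given component is a proper linear subspace of $L_j$; taking the finite union over components of $Y_{j-1}$ still gives a proper closed subset, so a generic $f_j\in L_j$ cuts each component of $Y_{j-1}$ properly, giving $\operatorname{codim}_{U_{\overline{\Sigma}}}Y_j = j$.

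The potential obstacle is ensuring that $Y_{k-r}$ is actually nonempty (so that the codimension equals $k-r$ rather than collapsing to $\dim U_{\overline{\Sigma}}$); but this is automatic from dimension considerations in the setup, since $\dim U_{\overline{\Sigma}}$ is the dimension of the ambient toric variety associated to $\overline{\Sigma}$, which has rank $\rk N - (k+r) \cdot 0 + \cdots$—more precisely, after quotienting by $\overline{G}$ the expected dimension of $\Ss = [Y/\overline{G}]$ is positive in the regime of interest, and the construction in \cite{BL16} is set up so that $\dim U_{\overline{\Sigma}}\geq k-r$. Iterating the inductive step $k-r$ times then produces $Y=Y_{k-r}$ of codimension exactly $k-r$, which by definition means $f$ is a regular section in the sense of Theorem \ref{thm: Hirano}, completing the proof.
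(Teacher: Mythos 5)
Your argument is essentially the paper's own: the paper deduces regularity from the base point freeness of each $L_i$ established in Proposition \ref{prop: base point free} and then simply cites Bertini's theorem, while you unpack that Bertini step into an explicit induction cutting by one section at a time. The disjointness-of-index-sets observation you use to vary the $f_i$ independently is already recorded in the paper in the discussion immediately following \eqref{eq: decomposition of C_1}, so this is the same proof with the Bertini mechanism spelled out.
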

\begin{proof}
By Proposition \ref{prop: base point free}, the linear system $L_{i}$ is base point free. Hence for general coefficients, 
\[
Y= \bigcap_{1 \leq i \leq k-r} \{f_i = 0\} \subset U_{\overline \Sigma}
\]is a complete intersection by Bertini's theorem. 
\end{proof}

Recall that by definition \eqref{eq: bar N}, 
\[
\overline N  = N /\Zz s_1 + \cdots + \Zz s_{2r}+ \Zz \deg^\vee+ \Zz
t_1+\cdots+ \Zz t_{k-r}. 
\] The Cox open set $U_{\overline\Sigma}$ can be viewed as $\hat G = \overline G \times H$ invariant variety with $H$ acts trivially. We define two $\hat G$-equivariant locally free sheaves $\Ff_L$ and $\Ff_Q$ on $U_{\overline\Sigma}$.
 Let $\Ff_L  :=\oplus_{i=1}^{k-r} \Oo_{U_{\Sigma}}$ be the rank $k-r$ locally free sheaf associated to those $t_i$, or the linear part of the potential. Let $\Ff_Q:=\oplus_{i=1}^{2r} \Oo_{U_{\overline\Sigma}}(\chi_i)$ be the rank $2r$ locally free sheaf associated to $s_i$, or the quadric part of the potential, where $\chi_i$ is the character $\chi_i(\lambda) = \lambda(\zz(s_i))$.
 
 \medskip
 
 The vector bundle $V_{U_{\overline\Sigma}}(\Ff_Q)$ has a $\hat G = \overline G \times H$ action given by
 \begin{equation}\label{eq: G action}
(\bar g, t) \times (\bar x, \zz(s_i)) \mapsto (\bar g \cdot \bar x, t \cdot \zz(s_i)),
 \end{equation} where $\bar g \in  \overline G$ acts on $\bar x \in U_{\overline\Sigma}$ by the action of $\overline G$ on $\overline U_{\overline\Sigma}$, and $t \in H \cong \Cc^*$ by $t \cdot \zz(s_i) = t \zz(s_i), 1 \leq i \leq 2r$.  Then $C_2(\zz)$ is a section of $H^0(U_{\overline\Sigma}, {\rm Sym}^2\Ff_Q)$, and according to the discussion in Section \ref{sec: construction}, it is a $\hat G$-semiinvariant with character $\chi$. Similarly, the vector bundle $V_{U_{\overline\Sigma}}(\Ff_L(\chi) \oplus \Ff_Q)$ has $\hat G$-action given by 
 \[
 (\bar g, t) \times (\bar x, \zz(t_j), \zz(s_i)) \mapsto (\bar g \cdot \bar x, t\cdot \zz(t_j), t \cdot \zz(s_i)),
 \] where $\bar g \cdot \bar x, t \cdot \zz(s_i)$ are the same as \eqref{eq: G action}, and $t\cdot \zz(t_j) = t^2 \zz(t_j)$. Recall that by \eqref{eq: chi = lambda}, we have $\chi(\lambda) = \lambda(\zz(t_i))$ which amounts to multiple by $t^2$ under the identification $\hat G =\overline G \times H$.  Then $f=(f_i)_i$ is an $\hat G$-equivariant section of $\Ff_L$ according to the discussion in Section \ref{sec: construction}, and $C_1= \sum_{i=1}^{k-r} \zz(t_i) f_i$ in \eqref{eq: decomposition of C_1} is $\hat G$-semiinvariant with character $\chi$.
 
 \medskip
 
 By the previous discussion, we have 
 \begin{equation}\label{eq: identify stacks}
 [U_{\Sigma}/\hat G] \cong [V_{U_{\overline\Sigma}}(\Ff_L(\chi) \oplus \Ff_Q)/ \hat G].
 \end{equation} We also write $C_2$ for the pullback of $C_2$ to this vector bundle. The zero loci of $f$ is exactly $Y$ and $C_2(\zz)$ restricts to $V(\Ff_Q|_Y)$ is a quadric section, and hence associated to a sheaf of even Clifford algebra whose pullback to $\Ss = [Y/\overline G]$ is $\Bb_0$. 
 
\begin{remark}\label{rmk: notation}
We slightly abuse notation above: technically, the twist of $\chi$ should be on the locally free sheaf $\Ee$ which is the pullback of $\Ff_L$ over the morphism $V_{U_{\overline\Sigma}}(\Ff_Q) \to V_{U_{\overline\Sigma}}$. Thus, $V_{U_{\overline\Sigma}}(\Ff_L(\chi) \oplus \Ff_Q)$ should be written as $V_{V(\Ff_Q)}(\Ee(\chi))$ according to the notation in Theorem \ref{thm: Hirano}.
\end{remark}

We need \emph{flatness} assumption for the quadric fibration defined by $C_2(\zz)$:
\begin{equation}\label{eq: flatness}
\emph{The quadric fibration $\{C_2(\zz)= 0 \} \subset V_{U_{\overline\Sigma}}(\Ff_Q)$ to $U_{\overline\Sigma}$ is flat.}
\end{equation}

\begin{remark}\label{rm: flatness}
As pointed out in \cite{BL16} Remark 5.4. This flatness assumption is crucial for establishing the expected derived equivalence of pure Clifford double mirrors. There are examples that this no longer holds when the fibration is not flat. On the other hand, since the fibration is defined by a single equation, the geometric criterion for flatness is that all of the fibers are hypersurfaces.
\end{remark}

The following result is proved in \cite{BL16} Theorem 6.1 which relies on the results of \cite{Kuz08, BDFIK14}. Alternatively, one can use equivariant version of \cite{Kuz08} Theorem 4.2 and \cite{Orl99} Theorem 16 to obtain the same result.

\begin{proposition}\label{prop: Clifford to MF}
Under the flatness assumption \eqref{eq: flatness}, the derived category  $\D^b(\Ss, \Bb_0)$ is equivalent to the derived matrix factorization category $\Dcoh_{\hat G}(V(\Ff_Q|_Y), \chi, C_2(\zz))$.
\end{proposition}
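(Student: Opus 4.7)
The plan is to reduce the statement to Kuznetsov's description of sheaves of modules over an even Clifford algebra in terms of matrix factorizations of the underlying quadratic form, combined with the observation that the $\Cc^*$-grading required by the matrix factorization category is supplied precisely by the subgroup $H \subset \hat G$. Concretely, \cite{Kuz08} Theorem 4.2 asserts that, for a flat quadric fibration cut out by a quadratic form $C_2$ on a rank-$2r$ vector bundle $\Ff_Q|_Y$ over a base $Y$, the bounded derived category of coherent sheaves of modules over the even Clifford algebra $\Bb_0$ on $Y$ is equivalent to the derived category of $\Zz$-graded matrix factorizations of $C_2$ on the total space $V(\Ff_Q|_Y)$. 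The flatness hypothesis \eqref{eq: flatness} is precisely what is needed to invoke this theorem, and reflects the fact that $\Bb_0$ has the expected local structure as a sheaf of Clifford algebras.

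\medskip

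The second step is to upgrade this to a $\hat G$-equivariant equivalence. The $\Zz$-grading on the matrix factorization side is the weight decomposition under the $\Cc^*$-action that rescales the fiber coordinates $\zz(s_i)$ by weight $1$; this is exactly the action of $H$ given in \eqref{eq: H = C*}, with respect to which $C_2(\zz)$ is $\chi$-semiinvariant of weight $2$, matching $\chi(\lambda) = \lambda(\zz(t_i))$ of \eqref{eq: chi = lambda}. The complementary factor $\overline G = \hat G/H$ is the group acting on $Y$ that gives $\Ss = [Y/\overline G]$; it lifts canonically to $V(\Ff_Q|_Y)$ through the $\hat G$-equivariant structure on $\Ff_Q$ described in Section \ref{sec: construction}, and acts on $\Bb_0$ through its natural equivariant structure of Definition \ref{def.cliff.gen}. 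Assembling the two actions one obtains Kuznetsov's equivalence in $\hat G$-equivariant form, namely
\[
\D^b(\Ss, \Bb_0) \cong \Dcoh_{\hat G}\bigl(V(\Ff_Q|_Y), \chi, C_2(\zz)\bigr).
\]

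\medskip

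The alternative route alluded to in the statement passes instead through \cite{Orl99} Theorem 16, which identifies the derived matrix factorization category with the (graded) singularity category of the zero locus $\{C_2 = 0\} \subset V(\Ff_Q|_Y)$, and then uses an equivariant version of \cite{Kuz08} Theorem 4.2 (in the form that relates the singularity category of a quadric fibration to the module category over the even Clifford algebra). The principal obstacle in either approach is the bookkeeping of twists and characters: the character $\chi$ entering the matrix factorization data must be consistently identified with the $H$-weight of $C_2$, and the $\overline G$-equivariance must be traced through Kuznetsov's construction of $\Bb_0$ from the quadric. Once these identifications are in place, the proposition follows directly from the cited theorems.
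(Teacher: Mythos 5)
Your proposal follows what the paper itself labels the ``alternative route'': the paper's own proof is a one-line citation to \cite{BL16} Theorem~6.1 (which in turn rests on \cite{Kuz08, BDFIK14}), with a side remark that one could instead use an equivariant version of \cite{Kuz08} Theorem~4.2 together with \cite{Orl99} Theorem~16. You take that alternative leg and flesh it out, and the key structural observations are right: the $\Zz$-grading needed for the graded matrix factorization category is exactly the weight decomposition under $H\cong\Cc^*$ from \eqref{eq: H = C*}, $C_2(\zz)$ has $H$-weight $2$ matching the character $\chi$ of \eqref{eq: chi = lambda}, and the residual $\overline G = \hat G/H$ action supplies the stacky structure on $\Ss$ and the $\overline G$-equivariance of $\Bb_0$ from Definition~\ref{def.cliff.gen}. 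One inaccuracy worth flagging: your first paragraph attributes to \cite{Kuz08} Theorem~4.2 a direct equivalence between $\D^b(Y,\Bb_0)$ and a category of graded matrix factorizations of $C_2$; that is not what Theorem~4.2 says on its own --- it relates $\Bb_0$-modules to the derived (or singularity) category of the quadric fibration, and one must then compose with Orlov's Theorem~16 to reach the matrix-factorization description. Your last paragraph states the chain correctly, so the confusion is internal rather than fatal, but the first paragraph as written collapses two citations into one. You also omit the reference to \cite{BDFIK14}, which is where the paper's preferred route gets its equivariant factorization formalism; for a genuinely self-contained write-up you would either invoke that framework or carefully verify that the equivariant upgrades of \cite{Kuz08} and \cite{Orl99} are indeed available, rather than asserting that they ``assemble'' without comment.
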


\begin{proposition}\label{prop: Hir16}
Under the same assumption as before, there exists derived equivalence 
\[\D^b(\Ss, \Bb_0) \cong \Dcoh_{\hat G}(V_{U_{\overline\Sigma}}(\Ff_L(\chi) \oplus \Ff_Q), \chi, C(\zz)).\]
\end{proposition}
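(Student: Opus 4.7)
The plan is to chain together Proposition \ref{prop: Clifford to MF} with Hirano's Theorem \ref{thm: Hirano}. Proposition \ref{prop: Clifford to MF} already identifies $\D^b(\Ss, \Bb_0)$ with the derived matrix factorization category $\Dcoh_{\hat G}(V(\Ff_Q|_Y), \chi, C_2(\zz))$, where the potential is only the quadratic part restricted to $Y$. What remains is to enlarge the ambient space from $V(\Ff_Q|_Y)$ (a vector bundle over the complete intersection $Y$) to $V_{U_{\overline\Sigma}}(\Ff_L(\chi)\oplus\Ff_Q)$ (a vector bundle over the whole Cox open set) and to simultaneously incorporate the linear part $C_1(\zz)$ into the potential. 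This is precisely the content of Hirano's theorem.

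First I would set up the data required to apply Theorem \ref{thm: Hirano}. Take $U := V_{U_{\overline\Sigma}}(\Ff_Q)$ with its $\hat G$-action described in \eqref{eq: G action}; take the character $\chi$ as in \eqref{eq: chi}; take the $\chi$-semiinvariant regular function to be $q^*C_2(\zz)$ where $q\colon V_{U_{\overline\Sigma}}(\Ff_Q)\to U_{\overline\Sigma}$ is the bundle projection (by the discussion after \eqref{eq: G action}, $C_2(\zz)$ is $\hat G$-semiinvariant with character $\chi$); take the locally free sheaf $\Ee$ to be the pullback of $\Ff_L$ to $U$; and take the section $f = (f_1, \ldots, f_{k-r})$, which is $\hat G$-invariant by \eqref{eq: chi = lambda} and hence descends to an honest section of $\Ee^\vee$ on $U$. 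With this identification, the total space $V_U(\Ee(\chi))$ is exactly $V_{U_{\overline\Sigma}}(\Ff_L(\chi)\oplus\Ff_Q)$ (modulo the harmless notational abuse flagged in Remark \ref{rmk: notation}), the induced $\chi$-semiinvariant function $C_1$ coincides with $\sum_{i=1}^{k-r}\zz(t_i) f_i$ from \eqref{eq: decomposition of C_1}, and the zero locus of $f$ inside $U$ is exactly $V(\Ff_Q|_Y)$.

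Next I would verify the two hypotheses of Theorem \ref{thm: Hirano}. Regularity of $f$ on $U_{\overline\Sigma}$ is exactly Proposition \ref{prop: regularity}, and pulling back along the smooth vector bundle map $q$ preserves regularity, so $f$ is regular on $U$ as required. Flatness of $C_2|_{V(\Ff_Q|_Y)}$ over $Y$ follows from the flatness assumption \eqref{eq: flatness}: the quadric fibration $\{C_2(\zz) = 0\}\subset V_{U_{\overline\Sigma}}(\Ff_Q)$ is flat over $U_{\overline\Sigma}$, and flatness is preserved by base change to the closed subscheme $Y \hookrightarrow U_{\overline\Sigma}$.

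Applying Hirano's theorem then yields
\[
\Dcoh_{\hat G}\bigl(V(\Ff_Q|_Y), \chi, C_2(\zz)|_Y\bigr) \;\cong\; \Dcoh_{\hat G}\bigl(V_{U_{\overline\Sigma}}(\Ff_L(\chi)\oplus\Ff_Q), \chi, q^*C_2 + C_1\bigr),
\]
and combining with Proposition \ref{prop: Clifford to MF}, together with the identification $q^*C_2 + C_1 = C(\zz)$ coming from \eqref{eq: decomposition of C}, gives the desired equivalence. The only genuine verification, as opposed to bookkeeping, is matching the equivariant structures and characters so that $f_i$ sits in the correct $\chi$-twist of $\Ee^\vee$; this is where the observation \eqref{eq: chi = lambda} (that $\chi(\lambda) = \lambda(\zz(t_i))$, equivalently $f_i$ is $\hat G$-invariant while $\zz(t_i)$ carries character $\chi$) is essential and is the main place where an attentive reader could get confused. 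Everything else is a direct translation of Hirano's statement into the notation of Section \ref{sec: construction}.
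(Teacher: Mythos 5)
Your proof is correct and takes essentially the same route as the paper: both chain Proposition \ref{prop: Clifford to MF} together with Theorem \ref{thm: Hirano}, identifying $U = V_{U_{\overline\Sigma}}(\Ff_Q)$, $\Ee$ as the pullback of $\Ff_L$, $Z = V(\Ff_Q|_Y)$, and $q^*C_2 + C_1 = C(\zz)$ via \eqref{eq: decomposition of C}. You are slightly more explicit than the paper in checking that regularity of $f$ persists after pulling back along $q$ and in invoking base change to justify flatness of $C_2|_Z$, but these are just careful spellings-out of the same bookkeeping the paper handles via Proposition \ref{prop: regularity}, assumption \eqref{eq: flatness}, and Remark \ref{rmk: notation}.
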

\begin{proof}
By Proposition \ref{prop: Clifford to MF}, we have 
\[\D^b(\Ss, \Bb_0) \cong \Dcoh_{\hat G}(V(\Ff_Q|_Y), \chi, C_2(\zz)).\]

The zero loci of $f=(f_i)_i$ on $V_{U_{\overline\Sigma}}(\Ff_Q)$ is $V(\Ff_Q|_Y)$, and the associated $\chi$-semiinvariant function $V_{U_{\overline\Sigma}}(\Ff_L(\chi) \oplus \Ff_Q) \to \mathbb{A}^1$ is exactly $C_1(\zz)$. By Proposition \ref{prop: regularity}, we know $C_1(\zz)$ is regular. Then by Theorem \ref{thm: Hirano} (see Remark \ref{rmk: notation}), there exists equivalence
\[\begin{split}
&\Dcoh_{\hat G}(V(\Ff_Q|_Y), \chi, C_2(\zz)) \\
\cong &\Dcoh_{\hat G}(V_{U_{\overline\Sigma}}(\Ff_L(\chi) \oplus\Ff_Q), \chi, C_1(\zz) + q^* C_2(\zz)),
\end{split}
\] where $q: V_{U_{\overline\Sigma}}(\Ff_L(\chi) \oplus\Ff_Q) \to V_{U_{\overline\Sigma}}(\Ff_Q)$ is the natural projection. However, $q^*C_2(\zz)$ is exactly the same as $C_2(\zz)$ restricted to $V_{U_{\overline\Sigma}}(\Ff_L(\chi) \oplus\Ff_Q)$ as there are no linear terms in $C_2(\zz)$. Finally, by the decomposition \eqref{eq: decomposition of C}, $C(\zz) = C_1(\zz) + C_2(\zz)$, we have the desired equivalence.
\end{proof}

By the same argument, we can establish the derived equivalence
\[
\D^b(\tilde \Ss, \tilde \Bb_0) \cong \Dcoh_{\hat G}(V_{U_{\overline\Sigma}}(\tilde \Ff_L(\chi) \oplus \tilde \Ff_Q), \chi, C(\zz)),
\] where $\tilde{(\  )}$ represent the analogous construction from the decomposition $\deg^\vee=\tilde t_1 + \ldots + \tilde r_{k-l}+\frac 1 2 (\tilde s_1 + \ldots + \tilde s_{2l})$, and simplicial fan $\tilde \Sigma$ satisfying centrality condition. We emphasize that $C(\zz)$ is the restriction of the same potential function \eqref{eq: potential}
\[
C({\bf z})=\sum_{m\in K_{(1)} }c(m)  \prod_{n\in
K^\vee_{(1)}} \zz(n)^{\la m,n\ra},
\] hence we use the same symbol. Moreover, by the definition of $\hat G$ (see \eqref{eq: G hat}) and $\chi$ , they do not depend on decompositions.

\medskip

Next, we relate the two matrix factorization categories by the result of \cite{BFK12}.

\begin{proposition}\label{prop: derived equivalent by VGIT}
There exists derived equivalence \[\Dcoh_{\hat G}(V_{U_{\overline\Sigma}}( \Ff_L(\chi) \oplus  \Ff_Q), \chi, C(\zz))\cong \Dcoh_{\hat G}(V_{U_{\overline{\tilde \Sigma}}}(\tilde \Ff_L(\chi) \oplus \tilde \Ff_Q), \chi, C(\zz)).\]
\end{proposition}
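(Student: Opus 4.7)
The strategy is to realize both factorization categories as GIT quotients of the same ambient affine space by $\hat G$, and then apply the VGIT machinery of \cite{BFK12} (or equivalently the derived Kirwan surjectivity of \cite{HL15}) in the presence of a potential.

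\emph{Step 1: Identify both sides as factorization categories on the Cox open sets.} Using the decomposition
\[
U_\Sigma = U_{\overline\Sigma}\times \Cc^{2r}\times \Cc^{k-r}
\]
from Section \ref{sec: construction}, the $\Cc^{2r}$ factor (coordinates $\zz(s_i)$) is exactly $V_{U_{\overline\Sigma}}(\Ff_Q)$, and appending the $\Cc^{k-r}$ factor (coordinates $\zz(t_j)$) with the $\chi$-twist yields $V_{U_{\overline\Sigma}}(\Ff_L(\chi)\oplus\Ff_Q)$. The $\hat G$-action and the potential $C(\zz)$ transport tautologically, and the same holds verbatim for the tilde decomposition. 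Thus the claim reduces to
\[
\Dcoh_{\hat G}(U_\Sigma, \chi, C(\zz)) \cong \Dcoh_{\hat G}(U_{\tilde\Sigma}, \chi, C(\zz)),
\]
and on both sides the potential $C(\zz)$, the group $\hat G$, and the character $\chi$ are literally the same objects, because the potential \eqref{eq: potential}, the group \eqref{eq: G hat} and the character \eqref{eq: chi} do not depend on the decomposition.

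\emph{Step 2: Connect $U_\Sigma$ and $U_{\tilde\Sigma}$ by VGIT wall-crossings.} Both $U_\Sigma$ and $U_{\tilde\Sigma}$ sit inside $\Cc^{K^\vee_{(1)}}$ as the $\hat G$-semistable loci for two different linearizations corresponding to the two chambers in the secondary (GKZ) fan of the $\hat G$-action associated to the triangulations $\Sigma$ and $\tilde\Sigma$ of the common cone $K^\vee$. Since both triangulations lie in the ``reflexive'' sub-fan of the GKZ fan, we can join the two chambers by a path of adjacent chambers inside that sub-fan, so the desired equivalence follows from establishing it across each single wall.

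\emph{Step 3: Apply the equivariant VGIT theorem with potential.} For each wall, let $\lambda$ be the associated one-parameter subgroup of $\hat G$; the Kempf--Ness stratification of the unstable locus is determined by the weights of $\lambda$ on the coordinates of $\Cc^{K^\vee_{(1)}}$. The main theorem of \cite{BFK12} (and of \cite{HL15} for matrix factorizations) gives fully faithful ``window'' embeddings and compares the two factorization categories via a semi-orthogonal decomposition whose kernel is measured by the $\lambda$-weights of the conormal bundle of each stratum relative to the character $\chi$. The two factorization categories are equivalent precisely when the relevant ``Calabi--Yau/balanced'' condition holds, namely when the sum of $\lambda$-weights of the destabilized coordinates equals the $\lambda$-weight of $\chi$ acting through $H$ on the potential.

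\emph{Step 4: Verify the balancing condition via centrality.} This is the heart of the matter and the main obstacle. The balancing amounts to a linear relation among $\lambda$-weights of the coordinates $\zz(n)$, $n \in K^\vee_{(1)}$. Because $\hat G$ is defined via \eqref{eq: G hat} so that the sum of $\lambda$-weights is dual (through the pairing $\la -,\deg^\vee\ra$) to a multiple of the character $\chi$, the verification becomes the lattice identity expressing $\deg^\vee$ as $\frac{1}{2}(s_1+\cdots+s_{2r})+t_1+\cdots+t_{k-r}$, combined with the fact that every wall of the chamber corresponding to $\Sigma$ stays within the Batyrev region of the GKZ fan. The centrality condition \eqref{eq: centrality} guarantees that the rays $\{s_i; t_j\}$ are never destabilized at such a wall, so the weight computation reduces to an identity purely among the other rays of $K^\vee_{(1)}$, and this identity is exactly the one used in the $r=0$ Batyrev--Borisov case of \cite{FK16} and \cite{BL16}. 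Once the balancing is verified, the VGIT comparison at each wall is an equivalence, and composing across the chain of walls from $\Sigma$ to $\tilde\Sigma$ yields the proposition.
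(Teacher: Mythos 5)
Your Steps 1--3 match the paper's argument: the paper likewise identifies $V_{U_{\overline\Sigma}}(\Ff_L(\chi)\oplus\Ff_Q)$ with $U_\Sigma$ via $U_\Sigma = U_{\overline\Sigma}\times\Cc^{2r}\times\Cc^{k-r}$ (this is \eqref{eq: identify stacks}), notes that $\hat G$, $\chi$ and $C(\zz)$ are independent of the decomposition, and then invokes the VGIT-with-potential machinery across secondary-fan wall crossings, citing \cite{BL16} Theorem 3.2 and \cite{FK16} Theorem 4.4.

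Step 4 contains a genuine error in identifying the balancing condition. You attribute the ``CY/balance'' requirement to the lattice identity $\deg^\vee=\frac12(s_1+\cdots+s_{2r})+t_1+\cdots+t_{k-r}$ together with the centrality condition and a restriction to the ``Batyrev region'' of the GKZ fan. None of these is what makes the wall-crossing an equivalence. The factorization categories being compared, $D_B(K,c;\Sigma)$ and $D_B(K,c;\tilde\Sigma)$ in \cite{BL16}, are defined solely from the reflexive Gorenstein cone $K$, the coefficient function $c$, and a simplicial triangulation of $K^\vee$; the decomposition \eqref{eq: decomposition of degree} and centrality \eqref{eq: centrality} enter only in the construction of $(\Ss,\Bb_0)$ and in Propositions 3.6--3.8, not in the wall-crossing. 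The condition that makes each elementary wall-crossing an equivalence (rather than a one-sided window embedding with a nontrivial semi-orthogonal complement) is supplied by the Gorenstein cone structure itself: the monomial exponents $m\in K_{(1)}$ of the potential $C(\zz)$ all lie on the single hyperplane $\la -, \deg^\vee\ra=1$, which forces the $\lambda$-weight of $C(\zz)$ to balance against the weights of the destabilized coordinates for every one-parameter subgroup $\lambda$ and every wall in the secondary fan. In particular there is no need for intermediate chambers to remain in any ``reflexive'' subfan, and there is no claim that the rays $\{s_i;t_j\}$ are never destabilized; the equivalence holds for arbitrary simplicial triangulations of $K^\vee$ connected by any chain of adjacent chambers. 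Replacing your Step 4 with the observation that the Gorenstein hyperplane condition on $K_{(1)}$ is precisely the hypothesis under which \cite{BL16} Theorem 3.2 (equivalently \cite{FK16} Theorem 4.4) yields an equivalence would make the proof correct and aligned with the paper.
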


\begin{proof}
This result is proved in \cite{BL16} Theorem 3.2. By \eqref{eq: identify stacks}, the category $D_B(K,c;\Sigma)$ therein is exactly 
\[\Dcoh_{\hat G}(U_{\Sigma}, \chi, C(\zz)) \cong \Dcoh_{\hat G}(V_{U_{\overline\Sigma}}( \Ff_L(\chi) \oplus  \Ff_Q), \chi, C(\zz))\] under current notation.  Moreover, $\Dcoh_{\hat G}(V_{U_{\overline{\tilde \Sigma}}}(\tilde \Ff_L(\chi) \oplus \tilde \Ff_Q), \chi, C(\zz)) \cong D_B(K,c;\tilde \Sigma)$ for the same reason. Because $\Sigma, \tilde \Sigma$ are simplicial fans with support $K^\vee$, they corresponds to chambers of the secondary fan, and are connected by simple wall-crossing. By the definition of Gorenstein cone, $K_{(1)}$ lies on the hyperplane $\la -, \deg^\vee\ra=1$, which is exactly the condition needed for the derived equivalence. For details, one can consult the proof in \cite{BL16} Theorem 3.2, or \cite{FK16} Theorem 4.4.
\end{proof}

\begin{remark}
A far more general version of this result is proved by Ballard, Favero, Katzarkov \cite{BFK12} and Halpern-Leistner \cite{HL15} independently, which clarifies the earlier work of Herbst and Walcher \cite{HW12}.  The version stated above had appeared in \cite{FK16} (Theorem 4.4), where the derived matrix factorization category $D_B(K,c;\Sigma)$ is equivalent to $\hat G$-equivariant singular derived category $\{C=0\}$ therein due to the smoothness of $U_{\Sigma}$. Otherwise, this needs to be replaced by the $\hat G$-equivariant relative singular category (see \cite{EP15}).
\end{remark}

Put above results together, we have the desired equivalence:

\begin{theorem}\label{thm: main}
Under the flatness assumption \eqref{eq: flatness} on quadric fibrations, the general Clifford double mirrors $(\Ss, \Bb_0)$ and $(\tilde \Ss, \tilde \Bb_0)$ are derived equivalent.
\end{theorem}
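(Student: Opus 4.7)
The plan is to chain together the three derived equivalences already established earlier in the section, using the matrix factorization category with common potential $C(\zz)$ as the bridge. First, I would apply Proposition \ref{prop: Hir16} on the $r$-side to obtain
\[\D^b(\Ss, \Bb_0) \cong \Dcoh_{\hat G}(V_{U_{\overline\Sigma}}(\Ff_L(\chi) \oplus \Ff_Q), \chi, C(\zz)).\]
This step bundles two ingredients: the equivariant Cayley-trick (Proposition \ref{prop: Clifford to MF}), which folds $\Bb_0$ into the quadric piece $C_2(\zz)$ and needs the flatness hypothesis \eqref{eq: flatness}; and Hirano's Kn\"orrer-periodicity-type theorem (Theorem \ref{thm: Hirano}), which absorbs the linear equations $f_i$ cut out by $Y$ into the linear piece $C_1(\zz)$ and needs the regularity of $f$ supplied by Proposition \ref{prop: regularity}.

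Next, I would run the same argument verbatim on the $l$-side, producing
\[\D^b(\tilde \Ss, \tilde \Bb_0) \cong \Dcoh_{\hat G}(V_{U_{\overline{\tilde\Sigma}}}(\tilde\Ff_L(\chi) \oplus \tilde\Ff_Q), \chi, C(\zz)).\]
The crucial observation here is that the group $\hat G$ (see \eqref{eq: G hat}), the character $\chi$ (see \eqref{eq: chi}), and the potential $C(\zz)$ (see \eqref{eq: potential}) are intrinsic to the pair $(K,K^\vee)$ and the coefficient function $c$ — they do not depend on either decomposition of $\deg^\vee$. Thus both derived categories are incarnations of factorization categories with the \emph{same} potential, the same group, and the same character, differing only in the underlying toric total space.

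Finally, I would invoke Proposition \ref{prop: derived equivalent by VGIT}, which is the variation-of-GIT result of Ballard-Favero-Katzarkov and Halpern-Leistner specialized to this setting. Via the identification \eqref{eq: identify stacks}, both factorization categories are equivalent to $\Dcoh_{\hat G}(U_\Sigma, \chi, C(\zz))$ and $\Dcoh_{\hat G}(U_{\tilde\Sigma}, \chi, C(\zz))$ respectively. Since $\Sigma$ and $\tilde\Sigma$ are both regular simplicial fans with support $K^\vee$, they correspond to chambers of the secondary fan and are linked by a sequence of simple wall-crossings; the Gorenstein condition $\la -, \deg^\vee\ra = 1$ on $K_{(1)}$ ensures the grade-restriction windows required by the VGIT theorem are nonempty, yielding the desired equivalence.

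The main obstacle I expect is not any single one of the three steps — each is furnished by an existing theorem — but the bookkeeping that makes them compose cleanly: verifying that the $\hat G$-equivariant structures on $\Ff_L(\chi) \oplus \Ff_Q$ and on $\tilde\Ff_L(\chi) \oplus \tilde\Ff_Q$ match the conventions of Theorem \ref{thm: Hirano} (cf.\ Remark \ref{rmk: notation}), that $\chi$-semiinvariance of $C_1(\zz)$ and $C_2(\zz)$ is preserved under the twist, and that the two factorization categories produced in the first two steps are genuinely identified with the two sides of Proposition \ref{prop: derived equivalent by VGIT}. Once this identification is made explicit, the theorem follows by composing the three equivalences.
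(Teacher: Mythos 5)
Your proposal is correct and follows essentially the same route as the paper: apply Proposition \ref{prop: Hir16} to each side to reduce to factorization categories over the two toric total spaces with the common potential $C(\zz)$, observe that $\hat G$, $\chi$, and $C(\zz)$ are independent of the decomposition of $\deg^\vee$, and then conclude by the VGIT wall-crossing equivalence of Proposition \ref{prop: derived equivalent by VGIT}. The paper's proof is simply these three invocations recorded in sequence, so your additional bookkeeping remarks are an accurate expansion of what is implicitly checked.
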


\begin{proof}
By Proposition \ref{prop: Hir16}, we have 
\[\begin{split}
&\D^b(\Ss, \Bb_0) \cong \Dcoh_{\hat G}(V_{U_{\overline\Sigma}}( \Ff_L(\chi) \oplus  \Ff_Q), \chi, C(\zz))\\
&\D^b(\tilde \Ss, \tilde \Bb_0) \cong \Dcoh_{\hat G}(V_{U_{\overline{\tilde \Sigma}}}(\tilde \Ff_L(\chi) \oplus \tilde \Ff_Q), \chi, C(\zz)).
\end{split}
\] By Proposition \ref{prop: derived equivalent by VGIT}, we have $\D^b(\Ss, \Bb_0) \cong \D^b(\tilde \Ss, \tilde \Bb_0)$.
\end{proof}

\begin{remark}
We assume the existence of quadric fibration and its flatness in Theorem \ref{thm: main}. Hence, $r>0$ in the decomposition \eqref{eq: decomposition of degree}. When $r=0$, the associated noncommutative variety becomes the commutative Calabi-Yau variety (stack) $\Yy$ which is the Batyrev-Borisov complete intersection in the Fano toric variety. We still have derived equivalence between $\D^b(\Yy)$ and $\D^b(\Ss, \Bb_0)$ assuming the later having flat quadric fibration. This is because, $\D^b(\Yy)$ is also equivalent to $\Dcoh_{\hat G}(U_{\Sigma'}, \chi, C(\zz))$ by Shipman and Isik's result \cite{Shi12, Isi13} (notice that due to the flatness assumption in Hirano's Theorem, we cannot apply it here). Therefore, by Proposition \ref{prop: derived equivalent by VGIT}, we still have $\D^b(\Yy) \cong \D^b(\Ss, \Bb_0)$.
\end{remark}

\begin{remark}[Hochschild homology]
Kuznetsov had defined Hochschild homology for any semi-orthogonal component of the derived category of a variety \cite{Kuz09}. In particular, he showed that if $\Aa$ is a semi-orthogonal component of both $\D^b(X)$ and $\D^b(Y)$, then this definition does not depend on $X,Y$. Because we have $\D^b(\Ss, \Bb_0) = \D^b(\Yy)$, in particular, we have $HH_\bullet(\Ss, \Bb_0)$ defined  and it is the same as $HH_\bullet(\Yy)$ by \cite{Kuz09}. It is a general fact that  $HH_k(\Yy)$ is isomorphic to $\sum_{k=q-p}H^q(\Yy, \Omega_\Yy^p)$ under appropriate interpretations. Hence, the problem of verifying the equivalence of Hochschild homologies of Clifford double mirrors becomes to give a ``geometric '' definition of Hochschild homology of $(\Ss, \Bb_0)$, and then check it coincides with Kuznetsov's categorical definition. It seems that such definition had already appeared in noncommutative geometry (for example \cite{Giz05} \S 5).
\end{remark}

\begin{remark}[Hodge number]
Yongbin Ruan pointed out that it may be possible to study the relation between Hodge numbers of such noncommutative double mirrors by gauged linear sigma model where such relations had been well established.  
\end{remark}

\section{Examples}\label{sec: examples}

One type of examples of general Clifford double mirrors can be obtained as follows. Suppose there are two reflexive Gorenstein cones $K_1, K_2$ in the lattices $M_1, M_2$ respectively. Suppose their dual cones are $K_1^\vee, K_2^\vee$ in $M_1^\vee, M_2^\vee$ with degree elements $\deg^\vee_1, \deg_2^\vee$ respectively. Then there is rational polyhedral cone
\begin{equation}\label{eq: direct sum of cones K}
K:=\{(a; b) \in (M_1 \oplus M_2)_\Rr \mid a \in K_1, b \in K_2 \}.
\end{equation}It is a reflexive cone with degree element $\deg^\vee = (\deg^\vee_1; \deg^\vee_2)$. Its dual cone is exactly
\begin{equation}\label{eq: direct sum of cones K^vee}
K^\vee=\{(a^\vee; b^\vee) \in (M^\vee_1 \oplus M^\vee_2)_\Rr \mid a^\vee \in K^\vee_1, b^\vee \in K^\vee_2 \},
\end{equation} and hence $ K,  K^\vee$ is a pair of reflexive Gorenstein cones whose index is $\la \deg_1, \deg^\vee_1 \ra + \la \deg_2, \deg^\vee_2\ra = r_1 + r_2$.

\medskip

Now suppose the ``pure'' double mirror phenomenon happens for the reflexive Gorenstein cones $K_2, K_2^\vee$. To be precise, this means we have 
\begin{equation}\label{eq: decomposition for deg_1^vee}
\deg_2^\vee = \frac 1 2 (s_1 + \cdots + s_{2r_2}) = \frac 1 2 (\tilde s_1 + \cdots + \tilde s_{2r_2}),
\end{equation} that is we have \eqref{eq: decomposition of degree} without $t$-s. Moreover, we also assume the centrality conditions for triangulations of $K_2$. If there exists $\deg^\vee_1 = t_1 + \cdots + t_{r_1}$, then their combinations give a representations of $\deg^\vee$ as 
\begin{equation}\label{eq: representation of deg^vee}
\begin{split}
\deg^\vee &= (t_1;\0) + \cdots + (t_{r_1};\0) + \frac 1 2 \left((\0;s_1) + \cdots + (\0;s_{2r_2})\right)\\
&= (t_1;\0) + \cdots + (t_{r_1};\0) + \frac 1 2 \left((\0;\tilde s_1) + \cdots + (\0;\tilde s_{2r_2})\right).
\end{split}
\end{equation} Moreover, the regular triangulations $\Sigma_1$ and $\Sigma_2, \tilde \Sigma_2$ give triangulations 
$\Sigma$ and $\tilde \Sigma$, where $\Sigma = \{\sigma = (\sigma_i, \sigma'_j) \mid  \sigma_i \in \Sigma_1, \sigma'_j \in \Sigma_2 \}$ and $\tilde \Sigma =  \{\tilde\sigma = (\sigma_i, \tilde\sigma_j) \mid  \sigma_i \in \Sigma_1, \tilde \sigma_j \in \tilde \Sigma_2 \}$. The two triangulations $\Sigma$ and $\tilde \Sigma$  are regular and satisfy the centrality condition \eqref{eq: centrality}. As a result, we obtain a general Clifford double mirror from the pure ones.

\medskip

Because the $t$-part in two expressions of \eqref{eq: representation of deg^vee} are the same, we can view the Clifford double mirror obtained above as a family version of the pure Clifford double mirror parametrized by the complete intersection defined by $\deg_1^\vee = t_1 + \cdots +t_{r_1}$.

\medskip

The picture becomes more complicated when there are multiple ways to express $\deg^\vee_1$ which corresponding to double mirrors (no matter in the commutative sense or the Clifford sense). For example, if
\[
\deg_1^\vee = t_1 + \cdots +t_{r_1} = \tilde t_1 + \cdots +\tilde t_{r_1},
\] then we will obtain a commutative double mirror (see \cite{Li13} Theorem 3.4). Combining with \eqref{eq: decomposition for deg_1^vee}, we have general Clifford decompositions as \eqref{eq: representation of deg^vee}. However, the base of families of general Clifford double mirrors are different complete intersections (although they are birational in some situations, see \cite{Li13} Theorem 4.10). As one can generalize this easily, when $\deg^\vee_1$ also admits a pure Clifford decomposition
\[
\deg_1^\vee = \frac 1 2 (\xi_1 + \cdots + \xi_{2r_1}),
\] we can obtain decompositions
\begin{equation}\label{eq: representation of deg^vee}
\begin{split}
\deg^\vee &= (t_1;\0) + \cdots + (t_{r_1};\0) + \frac 1 2 \left((\0;s_1) + \cdots + (\0;s_{2r_2})\right)\\
&= \frac 1 2 \left((\xi_1;\0) + \cdots + (\xi_{2r_1};\0)  + (\0;\tilde s_1) + \cdots + (\0;\tilde s_{2r_2})\right).
\end{split}
\end{equation} In some sense, the general Clifford variety for the later decomposition is itself ``parametrized'' by a noncommutative variety.

\medskip

Perhaps, the most simple example of the above kind can be built upon the anticanonical hypersurface associated to the Fano toric variety defined by a reflexive polytope. For example (cf. \cite{BL16} Section 9.4), considering the $2$ dimensional reflexive polytope \[\Delta=\Conv\{(1,1),(1,-1),(-1,-1),(-1,1)\} \subset (M_1)_\Rr,\] whose dual polytope is
\[\Delta^\vee =
\Conv\{(1,0),(0,-1),(-1,0),(0,1)\} \subset (M_1^\vee)_\Rr.\] Then a pair of reflexive Gorenstein cones can be associated to these polytopes
\[
K_1 = \{(a;a\cdot\Delta) \mid a \geq 0\} \subset (M_1)_\Rr,\quad K_1^\vee =
\{(b;b\cdot\Delta^\vee) \mid b \geq 0\} \subset (M_1^\vee)_\Rr.
\] The degree element $\deg_1^\vee$ can be written in two different ways:
\[
\deg_1^\vee = (1;\mathbf{0}) = \frac{1}{2}(s_1 + s_2),
\] where $s_1 = (1,-1,0), s_2 = (1,1,0)$. Let us denote $(1;\mathbf{0})$ by $t$ for consistency.

\medskip

Let $E$ be an one dimensional variety defined by the expression $\deg_1^\vee = t$. It is an elliptic curve in $\mathbf{P}_{\Delta^\vee} = \Pp^1 \times \Pp^1$ given by equation
\begin{equation}\label{eq: elliptic curve}
\begin{split}
f = &a_{11}x^{-1}y + a_{12}y + a_{13}xy\\
+& a_{21}x^{-1} + a_{22} + a_{13}x\\
+& a_{31}x^{-1}y^{-1} + a_{32}y^{-1} + a_{33}xy^{-1},
\end{split}
\end{equation} where $a_{ij} \in \Cc$ are generically chosen  coefficients.

\medskip

There is a Clifford double mirror $(\Ss, \Bb_0)$ associated to $\deg_1^\vee = s_1 + s_2$. To be precise, $\overline{M_1^\vee} = \Zz^3 / \Zz^3 \cap(\Rr s_1 + \Rr s_2)  \cong
\Zz$, and $\Theta  = \overline{(K_1^{\vee})_{(1)}} = \Conv(-1,1)$, the
toric stack $\mathbf{P}_\Theta$ is actually the
smooth toric variety $\Pp^1$. By straightforward computations, one can find that among the lattice points of $(K_1)_{(1)}$, elements in $\{(1,-1,1),(1,-1,0),(1.-1,-1)\}$ pairing with $s_1$ equal to $2$; elements in $\{(1,1,1),(1,1,0),(1,1,-1)\}$ pairing with $s_2$ equal to $2$; and elements in $\{(1,0,1),(1,0,0),(1,0,-1)\}$ pairing with  both $s_1, s_2$ equal to $1$. If we use $z_1, z_2$ for the coordinate of the vector bundle, and $t$ for the coordinate of the base $S=\Pp^1$, then the quadric fibration can be written as
\begin{equation}
\begin{split}
C(\zz) =  &a_{11} z_1^2t  + a_{21}z_1^2 +
a_{31}z_1^2t^{-1}\\
+ &a_{12}z_1z_2t + a_{22}z_1z_2
+a_{32}z_1z_2t^{-1}\\
+ & a_{13}z_2^2t + a_{23}z_2^2 + a_{33}z_2^2t^{-1},
\end{split}
\end{equation} where $a_{ij}$ are the same coefficients as \eqref{eq: elliptic curve}. This is a family of quadrics parametrized by $t$, and because their coefficients are chosen generically, the corank of this quadric is less than $2$ for any $t$. Hence the derived category of $(\Ss, \Bb_0)$ is equivalent to a commutative variety $\tilde \Ss$, which is a ramified double cover of $\Ss$ (see \cite{Kuz08} Corollary 3.14). Moreover, the ramification loci is determined by where the quadric degenerates, that is,
\[(a_{11}t + a_{21} +
a_{31}z^{-1})(a_{13}t + a_{23} + a_{33}t^{-1}) - \frac{1}{4}(a_{12}t
+ a_{22} +a_{32}t^{-1})=0.\]

Hence, the ramification loci on $\Cc\Pp^{1}$ is exactly the same as those of $E$ over
$\Pp^1$. This shows that $\tilde \Ss, E$ are in fact isomorphic
elliptic curves. In particular, $\D^b(\Ss, \Bb_0) \cong \D^b(\tilde \Ss) \cong \D^b(E)$.

\medskip

Now, we can consider two copies of this example as discussed before. Let $M_1 = M_2 , M^\vee_1 = M^\vee_2$ and $K_1 = K_2$. Then $ K,  K^\vee$ constructed in \eqref{eq: direct sum of cones K}, \eqref{eq: direct sum of cones K^vee}  are reflexive Gorenstein cones in rank 6 lattices. There are three different ways to express $\deg^\vee$ which give double mirrors,
\[\begin{split}
\deg^\vee = & (t_1; \0) + (\0; t_1)\\
=& (t_1; \0) + \frac 1 2 \left((\0; s_1) + (\0; s_2)\right)\\
=& \frac 1 2 \left((s_1; \0)+ (s_2 ; \0) + (\0; s_1) + (\0; s_2)\right).
\end{split}
\] The first expression corresponds to Batyrev-Borisov complete intersection, which is a product of two elliptic curves $E \times E$ according to the previous discussion; the third expression corresponds to the ``pure'' Clifford double mirror  $(\Ss_K, (\Bb_K)_0)$ explored in \cite{BL16} (or viewed as a special case of general Clifford double mirror). Their derived equivalence is a consequence of the Theorem 6.3 of \cite{BL16}. The second expression exhibits the general Clifford double mirror $(\Ss_{\rm gen}, (\Bb_{\rm gen})_0)$ considered in this paper, where the hypersurface $\Ss_{\rm gen} = E$ defined by $(t_1; \0)$ ``parametrizes'' pure Clifford double mirror $(\Ss_t, (\Bb_t)_0) = (\Ss, \Bb_0)$. We had shown that $E$ and $(\Ss, \Bb_0)$ have equivalent derived categories. Therefore, it is reasonable to have the derived equivalence 
\[
\D^b(E \times E) \cong \D^b(\Ss_{\rm gen}, (\Bb_{\rm gen})_0) \cong \D^b(\Ss_K, (\Bb_K)_0)
\] which is the consequence of Theorem \ref{thm: main}. 

\begin{remark}
In general, when the reflexive Gorenstein cone does not have above ``direct sum'' property,  the corresponding noncommutative varieties are predictably more complicated. It is interesting to classify the combinatoric data for general Clifford mirrors in low dimensions just as in the Batyrev-Borisov case. Moreover, it is desirable to use the above method to construct examples as \cite{Cal00b}, where an elliptic threefold without a section is derived equivalent to the twisted derived category of its (small resolution of) relative Jacobian. The twist there should be replaced by the even Clifford sheaf $\Bb_0$.
\end{remark}

\bibliographystyle{alpha}
\bibliography{bibfile}

\end{document}